\newtheorem{thm}{Theorem}
\newtheorem{lem}{Lemma}
\newtheorem{rem}{Remark}
\newtheorem{algo}{Algorithm}
\newtheorem{prob}{Problem}
\newcommand{\Div}[0]{\mathrm{div}\,}
\newcommand{\DIV}[0]{\mathbf{div}\,}
\newcommand{\bs}[1]{\boldsymbol{#1}}
\newcommand{\sig}[0]{\bs{\sigma}}
\newcommand{\D}[0]{\bs{D}}
\newcommand{\A}[0]{\bs{A}}
\newcommand{\U}[0]{\bs{u}}
\newcommand{\V}[0]{\bs{v}}
\newcommand{\W}[0]{\bs{w}}
\newcommand{\Lam}[0]{\bs{\lambda}}
\newcommand{\Mu}[0]{\bs{\mu}}
\newcommand{\X}[0]{\bs{\xi}}
\newcommand{\Af}[0]{a}
\newcommand{\Bf}[0]{\mathcal{B}}
\newcommand{\Lf}[0]{\mathcal{L}}
\newcommand{\Sf}[0]{\mathcal{S}_h}
\newcommand{\hf}[0]{h_{\partial \Omega}}
\newcommand{\N}[0]{\bs{n}}
\newcommand{\F}[0]{\bs{f}}
\newcommand{\Z}[0]{\bs{0}}
\newcommand{\Th}[0]{\mathcal{T}_h}
\newcommand{\Eh}[0]{\mathcal{E}_h}
\newcommand{\Fh}[0]{\mathcal{F}_h}
\newcommand{\Gh}[0]{\mathcal{G}_h}
\newcommand{\enorm}[1]{{\left\vert\kern-0.25ex\left\vert\kern-0.25ex\left\vert #1 
            \right\vert\kern-0.25ex\right\vert\kern-0.25ex\right\vert}}
\begin{document}

\author{Tom~Gustafsson \and Juha~Videman}
%\ead{tom.gustafsson@aalto.fi}
%\author[2]{Juha~Videman}
%\ead{jvideman@math.tecnico.ulisboa.pt}

%\institute{Tom~Gustafsson \at Department of Mechanical Engineering, Aalto University, P.O.~Box 11100, 00076 Aalto, Finland
%\and
%Juha~Videman \at CAMGSD/Departamento de Matem\unexpanded{á}tica, Instituto Superior T\unexpanded{é}cnico, Universidade de Lisboa, Lisbon, 1049-001, Portugal}

%\affiliation[2]{organization={, },
%postcode={1049-001 Lisbon},
%country={Portugal}}

\title{Stabilised finite element method for Stokes problem with nonlinear slip condition}

\maketitle

\begin{abstract}
    This work introduces a stabilised finite element formulation for the Stokes flow
    problem with a nonlinear slip boundary
    condition of  friction type.
    The boundary condition
    is enforced with the help
    of an additional Lagrange multiplier representing boundary traction
    and the stabilised formulation
    is based
    on simultaneously stabilising
    both the pressure
    and the traction.
    We establish the stability
    and the a priori error analyses,
    and perform a numerical
    convergence study
    in order to verify the theory.
    %\keywords{finite element method \and Stokes problem \and nonlinear slip condition \and stabilisation}
    %\subclass{65N30 \and 65N12 \and 76M10}
\end{abstract}

\section{Introduction}

The Stokes problem is a well known and extensively studied linear model for creeping flow.
There exist various physically justified
and mathematically valid boundary conditions
that can be directly applied.
For instance, some components of
the velocity field $\U$ may be prescribed
while the other components
are free to vary subject to a zero stress condition.
In the classical linear slip
boundary condition,
the normal velocity
is equal to zero and the tangential velocity remains unspecified.

However, in some cases a more
involved nonlinear interaction takes place between the fluid and
its surroundings.
Think, e.g., of a membrane
leaking only if the pressure becomes
large enough.
Another example is a slip flow
in which the tangential velocity at the boundary becomes nonzero if and only if
the shear stress exceeds
a prescribed, or velocity-dependent, friction threshold.
This inequality-type of friction laws
have been suggested, e.g.,
for the flow of
glaciers over the bedrock~\cite{schoof2006variational}.
Besides, given that the Stokes flow
problem is analogous to the equations
of linear elasticity in the incompressible limit~\cite{kouhia1995linear},
nonlinear slip conditions
could be used for modeling frictional contact
of incompressible solids.
See also Rao--Rajagopal~\cite{rao1999effect}
for more examples on the use of slip boundary conditions
in flow problems.

In this work, we focus on the nonlinear slip condition
with a prescribed Tresca friction threshold.
The nonlinear slip (and leak)
boundary conditions of friction type were first considered for incompressible fluids  by Fujita \cite{Fujita1994}---see also the discussion on slip boundary conditions for fluid flow problems in Le Roux \cite{LeRoux2005}.
These problems can be  written as  variational inequalities of the second kind, cf.~Fujita \cite{Fujita1994}, or, alternatively, as mixed variational inequalities by 
  expressing the boundary traction
as a Lagrange multiplier $\Lam$ which enforces the inequality constraint, cf.~\cite{ayadi2010mixed}.
Here, we adopt the second formulation and propose a stabilised finite element method for its numerical approximation.

Regarding the numerical approximations, Kashiwabara \cite{Kashiwabara2013a} has proven error estimates for the velocity-pressure pair $(\U,p)$
using Taylor--Hood $P_2$--$P_1$ finite elements
but with the inf-sup constant for the tangential component $\Lam_t$,
approximated using the trace space of $P_2$,
still depending on $h$.
The lack of uniform stability  means that it is not
possible to achieve optimal error estimates for $\Lam_t$.
However,  the value of $\Lam_t$ is needed in finding the active constraints at
each iteration step of the solution algorithm.
Therefore, it is  reasonable to aim at  uniform
stability for the  three unknowns $(\U,p,\Lam)$.

Achieving uniform stability simultaneously for $\Lam$ and $p$
can be done by different means, e.g., by a specific
choice of finite element spaces.
The work of Ayadi~et~al.~\cite{ayadi2010mixed,Ayadi2014,Ayadi2019}
is based on the use of $P_1$~bubble--$P_1$--$P_1$ triplet
as a stable choice of mixed finite element spaces.
This is a reasonable choice since $P_1$~bubble--$P_1$
for $(\U,p)$
is known to be stable in the case of the standard Stokes problem
and $P_1$--$P_1$ element for $(\U,\Lam)$ has been implemented to impose
boundary conditions using Lagrange multipliers---although
we would expect that minor modifications of the basis functions
are needed in the case of mixed boundary conditions~\cite{gustafsson2023mortaring}.
Other works based on mixed methods, with or without an explicit Lagrange multiplier
for the boundary condition,
include Djoko et al.~\cite{djoko2016numerical,Djoko2022} and Fang et al.~\cite{Fang2019}.

The present work focuses on residual stabilisation~\cite{barbosa1991finite,stenberg2015error}, i.e.~the inclusion
of additional residual terms in the variational formulation
to circumvent the Babu\v{s}ka--Brezzi condition.
If these residual terms are consistent and scaled properly,
it is possible to have stability for the $(p, \Lam)$ variables,
no matter which finite element spaces are considered
for discretisation.  This will greatly
improve the flexibility in choosing the finite element spaces
and allows for discretisations beyond those based on the boundary conditions.
%based on other considerations than the boundary conditions.
In this work, the stabilisation allows us to use the lowest
order $P_1$--$P_1$--$P_0$ element in our numerical
experiment, a triplet which would otherwise be
unstable.

Residual stabilisation has been considered in Djoko--Koko~\cite{djoko2022gls}
but only for the velocity--pressure pair and, hence, without  estimates for $\Lam$.
A stabilisation technique through pressure projection has been presented in Li--Li~\cite{li2011pressure}, and  discussed in Qiu et al.~\cite{qiu2018low} and Li et al.~\cite{li2019priori},
but again without estimates for $\Lam$.
Using residual stabilisation for both Lagrange multipliers, $p$ and $\Lam$,
we establish here a uniform stability estimate for all three variables.
This is shown to lead to a quasi-optimality result,
i.e.~a best approximation
result with an additional term due to the unknown location of the slip
boundary,
which is further
refined into an a priori error estimate for the lowest order method.

The work is organized as follows. In Section~2, we present the strong formulation of the problem and in Section~3
derive the corresponding weak formulation.
The stabilised finite element method is presented and
its stability analysed in Section~4.
In Section~5, the quasi-optimality estimate is proven
and shown to provide an optimal  priori error estimate
for the lowest order method.
In Section~6, we derive a solution algorithm
for the discrete variational inequality and,
in Section~7,  report on the results of our numerical
experiment which aims at corroborating the theoretical
convergence rates.

\section{Strong formulation}

Let $\Omega\in \mathbb{R}^d, d\in \{2,3\}$ denote a polygonal (polyhedral)  domain with a Lipschitz boundary $\partial \Omega$ and
let $\U : \Omega \rightarrow \mathbb{R}^d$ be the fluid velocity
field. Denoting the symmetric part of the velocity gradient by
\[
    \D(\U) = \frac12(\nabla \U + \nabla \U^T),
\]
we introduce the differential operator
\[
  \A \U = \U - \,\DIV (2\mu \D(\U)),
\]
where $\mu > 0$ is the kinematic viscosity.
Letting  $p : \Omega \rightarrow \mathbb{R}$ be the pressure field, the balance of linear momentum for an incompressible, homogeneous and linearly viscous fluid reads as 
\begin{equation}
\label{eq:stokes1strong}
\A \U + \nabla p = \F,
\end{equation}
where $\F: \Omega \rightarrow \mathbb{R}^d$ denotes the resultant of external forces. Equation \eqref{eq:stokes1strong}
holds together with the incompressibility constraint
\begin{equation}
\label{eq:stokes2strong}
\Div \U = 0.
\end{equation}

\begin{rem}
    We are considering here the "generalized" Stokes system
    % $\U -\DIV(2\mu \D(\U)) + \nabla p = \F$ instead of
    %$-\DIV(2\mu \D(\U)) + \nabla p = \F$
    \eqref{eq:stokes1strong}--\eqref{eq:stokes2strong}  for expediency.
    In particular, using the operator $\A\U$, instead of $- \,\DIV (2\mu \D(\U))$,  allows us to impose
    the slip  boundary condition
    on the entire $\partial \Omega$.
   Considering different boundary conditions
    at different parts of the domain requires
    resorting to the trace space
    $H^{1/2}_{00}$ \cite{tartar2007introduction} for the normal components of
    the velocity field trace which, in our opinion, is an unnecessary technical difficulty.
    We also note that
    the generalized equation is relevant, as such, for
    the implicit time discretization
    of the time-dependent Stokes equations,
    and that the solvers presented in this work and available at \cite{gustafsson_tom_2023_8296578}
    can be applied to the
    standard Stokes system by simply
    removing the additional term.
    % Our numerical experiments
    % demonstrate that the proposed
    % techniques are equally applicable
    % to the
    % standard Stokes equation
    % and also different types
    % of boundary conditions.
\end{rem}

% \[
% \left\{
% \begin{aligned}
%     -\A \U + \nabla p &= \F \\
%     \Div \U &= g
% \end{aligned}
% \right. \quad \text{in $\Omega$}
% \]

Next, let us introduce
the boundary conditions. Denoting the Cauchy stress tensor by
\begin{equation}
\label{eq:cauchystress}
\sig(\U,p) = -p \bs{I} +  2\mu \D(\U),
\end{equation}
and  the normal and tangential  components of $\U$ as 
$u_n= \U\cdot \N$ and $\U_t=\U-u_n\N$, where $\N:\partial\Omega\rightarrow \mathbb{R}^d$ is the outward unit normal to $\Omega$, we divide the normal stress vector $\sig(\U,p) \N$ into its normal and tangential components $\sigma_n$ and $\sig_t$ defined through
\[
 \sigma_n(\U,p)=\sig(\U,p)\N\cdot \N
\]
 and 
\[
\sig_t(\U,p)=\sig(\U,p)\N-\sigma_n(\U,p)\N.
\]
On the boundary $\partial \Omega$, we impose the following (nonlinear) slip boundary condition 
\begin{equation}
u_n=0,\quad |\sig_t| \leq \kappa, \quad \sig_t\cdot \U_t+\kappa |\U_t|=0,
\label{slipcond}
\end{equation}
where $\kappa : \partial \Omega \rightarrow (0,\infty)$ is a positive threshold function denoting an upper limit for the tangential stress before slip occurs. In other words, if $|\sig_t|< \kappa $ then the tangential velocity is zero and if $|\sig_t|= \kappa $ then
the tangential stress and velocity vectors are collinear with opposite directions. 
%Alternatively, we may consider the (nonlinear) leak boundary condition
%\begin{equation}
%\U_t=0,\quad |\sigma_n| \leq \kappa_l, \quad \sigma_n u_n+\kappa_l |u_n|=0,
%\label{leakcond}
%\end{equation}
%where the non-negative function $\kappa_l$ now denotes a threshold  for the leak to occur.  If $\kappa_s=0$ or if $\kappa_l=0$, conditions (\ref{slipcond}) and (\ref{leakcond}) reduce to the usual (linear) slip or leak boundary conditions.
In case the boundary condition is imposed with the help
of Lagrange multipliers,
the definition
\begin{equation}
   \Lam = \sig(\U, p)\N
   \label{lagmult}
\end{equation}
implies that
\begin{equation}
\label{eq:strongtresca}
\lambda_n u_n=0, \quad |\Lam_t| \leq \kappa, \quad \Lam_t \cdot \U_t+\kappa |\U_t|=0,
\end{equation}
where  $\lambda_n = \Lam \cdot \N$ denotes the normal component of the Lagrange multiplier and
 $\Lam_t = \Lam - \lambda_n \N$ its tangential component.
%or
%\begin{equation}
%\Lam_t \cdot \U_t=0,\quad |\lambda_n| \leq \kappa_l, \quad \lambda_n u_n +\kappa_l |u_n|=0
%\end{equation}
%depending on the type of boundary condition.

\begin{rem}
    \label{incompressible}
 Note that there exists an analogous
interpretation of the above problem
in solid mechanics.
It is well known that the Stokes
problem \eqref{eq:stokes1strong}--\eqref{eq:stokes2strong} can be
obtained from the equations of linear
elasticity by
defining "pressure" as
the product of the first Lam\'{e} parameter and the divergence of the displacement
field and then
letting the first
Lam\'{e} parameter approach infinity, corresponding to the incompressible limit.
Consequently, in solid mechanics,
the condition \eqref{slipcond}
can be referred to as the Tresca friction condition~\cite{gustafsson2022stabilized}.
\end{rem}

%%An analogous formulation can be obtained
%%by considering incompressible elasticity.
%%The governing equation is
%%\[
%%- \Div \sig(\U) = \F
%%\]
%%which written using the Lame parameters
%%reads
%%\[
%%-\Div(2\mu \Eps(\U) + \lambda\, \Div\U \bs{I}) = \F
%%\]
%%\[
%%p = \lambda \,\Div\U
%%\]

\section{Mixed variational formulation}

We will now present a mixed variational formulation for problem \eqref{eq:stokes1strong},\eqref{eq:stokes2strong}, \eqref{slipcond}. We use the following notation for the  $L^2$ inner products 
\[
(a, b) = \int_\Omega ab \,\mathrm{d}x, \quad (\bs{a}, \bs{b}) = \int_\Omega \bs{a} \cdot \bs{b}\,\mathrm{d}x,
\]
define  the velocity and pressure spaces  through
\[
    \bs{V} = H^1(\Omega)^d, \quad Q = \{ q \in L^2(\Omega) : (q, 1) = 0 \},
\]
and denote the trace space by $\bs{W}=H^{1/2}(\partial \Omega)^d$,
$d\in\{2,3\}$.
The space for the Lagrange multiplier, defined in \eqref{lagmult}, is
\[
\bs{\varLambda} = \{ \Mu \in \bs{M} : -\langle \Mu_t, \bs{v} \rangle \leq (\kappa, |\bs{v}_t|)_{\partial \Omega}~\forall \bs{v} \in \bs{W} \},
\]
where $\bs{M} = \bs{W}^\prime$
and $\langle .,. \rangle$ denotes
the duality pairing between $\bs{M}$ and $\bs{W}$.
In particular,
for $\Mu \in \bs{M}$ and $\bs{v} = (v_n \bs{n} + \bs{v}_t) \in \bs{W}$
we can write
\[
\langle \Mu, \bs{v} \rangle = \langle \mu_n, v_n \rangle + \langle \Mu_t, \bs{v}_t \rangle,
\]
where $\langle \mu_n, v_n \rangle = \langle \mu_n \bs{n}, v_n \bs{n} \rangle$; cf.~\cite{W11}.
%or
%\[
%\varLambda_n = \{ \mu_n \in H^{-1/2}(\partial \Omega)^d : |\mu_n| \leq \kappa_l \}, \quad
%\]

The mixed variational formulation of problem \eqref{eq:stokes1strong},\eqref{eq:stokes2strong}, \eqref{slipcond} now reads as follows:
find $(\U, p, \Lam) \in \bs{V} \times Q \times \bs{\varLambda}$ such that
\begin{equation}
\label{eq:weak1}
\left\{
\begin{alignedat}{2}
    a(\U, \V) - (\Div \V, p) - \langle \Lam, \V \rangle &= (\F, \V) \quad &&\forall \V \in \bs{V}, \\
    (\Div \U, q) &= 0 \quad &&\forall q \in Q, \ \\
    -\langle \Mu - \Lam, \U \rangle &\leq 0 \quad &&\forall \Mu \in \bs{\varLambda}.
\end{alignedat}
\right.
\end{equation}
The existence, uniqueness and regularity of solutions of the variational problem (without the Lagrange multiplier) has been studied in \cite{Fujita1994} and \cite{Saito04}.
The inequality in \eqref{eq:weak1}
is equivalent to $-\langle \mu_t - \lambda_t, u_t \rangle \leq 0$ (when $\mu_n = \lambda_n$) which
follows from the definition of the
space $\bs{\varLambda}$
and the last equality in \eqref{eq:strongtresca}.

Defining the bilinear form
\begin{align*}
    \Bf(\W, r, \X; \V, q, \Mu) &= \Af(\W,\V) + (q, \Div \W) - (r, \Div \V) - \langle \X, \V \rangle - \langle \Mu, \W \rangle
\end{align*}
where
\[
\Af(\W, \V) = (\W, \V) + (2\mu\D(\W), \D(\V)),
\]
we can write the problem \eqref{eq:weak1}, by summing the three parts in \eqref{eq:weak1}, in the following compact form:
\begin{prob}[Continuous variational form]
  \label{prob:cont}
  Find $(\U, p, \Lam) \in \bs{V} \times Q \times \bs{\varLambda}$ such that
  \[
  \Bf(\U, p, \Lam; \V, q, \Mu - \Lam) \leq (\F, \V) \quad \forall (\V, q, \Mu) \in \bs{V} \times Q \times \bs{\varLambda}.
  \]
\end{prob}
 
The following norm will be used in our analysis:
\begin{equation}
    \label{eq:contnorm}
    \enorm{(\W,r,\X)}^2 = \| \W \|_{1,\Omega}^2 + \| r \|_{0,\Omega}^2 + \| \X \|_{-\frac12}^2,
\end{equation}
where $ \| \cdot \|_{1,\Omega}$ and $ \| \cdot \|_{0,\Omega}$ are the usual norms in the Hilbert spaces $H^1(\Omega)$ and $L^2(\Omega)$ and
\[
\| \X \|_{-\frac12} = \sup_{\bs{w} \in \bs{W}} \frac{\langle \X, \bs{w}\rangle}{ \| \bs{w} \|_{\frac12}} \quad \text{and} \quad \| \bs{w} \|_{\frac12} = \inf_{\substack{\bs{v} \in (H^1(\Omega))^d\\ \bs{v}|_{\partial \Omega} = \bs{w}}} \| \nabla \bs{v} \|_0.
\]
Note that there exist $C, c > 0$ such that
\begin{equation}
\label{eq:normequiv}
c \| \W \|_{1,\Omega}^2 \leq \Af(\W, \W) \leq C \| \W \|_{1,\Omega}^2.
\end{equation}
In the following, we write $a \lesssim b$ (or $a \gtrsim b$) if there exists a constant $C > 0$, which is  independent of the
finite element mesh, but possibly varying from step to step, and satisfies $a \leq C b$ (or $a \geq C b$).

The proof of the following result can be inferred, e.g., from~\cite{Ayadi2014}.
\begin{thm}[Continuous stability]
\label{thm:contstab}
For every $(\W, r, \X) \in \bs{V} \times Q \times \bs{\varLambda}$ there exists $(\V, q, \Mu) \in \bs{V} \times Q \times \bs{\varLambda}$
satisfying
\[
\Bf(\W, r, \X; \V, q, \Mu) \gtrsim \enorm{(\W, r, \X)}^2
\]
and
\[
\enorm{(\V, q, \Mu)} \lesssim \enorm{(\W, r, \X)}.
\]
\end{thm}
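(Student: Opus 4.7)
The plan is to exhibit an explicit test triple $(\V, q, \Mu)$ in the Fortin style, built from three ingredients: coercivity of $\Af$, the classical Stokes inf-sup condition, and the definition of the negative trace norm. The crucial preliminary observation is that $\bs{\varLambda}$ is symmetric under negation: replacing $\bs{v}$ by $-\bs{v}$ in the defining inequality shows that $\X \in \bs{\varLambda}$ implies $-\X \in \bs{\varLambda}$. This is what permits using $\Mu = -\X$ below to cancel the troublesome $\langle \X, \W \rangle$ contributions while keeping $\Mu$ admissible.

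Next I would collect the auxiliary lifts. By the standard Stokes inf-sup condition on a Lipschitz domain, there exists $\V_r \in \bs{V}$ with $\V_r|_{\partial \Omega} = \Z$, $\Div \V_r = -r$, and $\|\V_r\|_{1,\Omega} \lesssim \|r\|_{0,\Omega}$; the vanishing trace makes $\V_r$ invisible to the duality pairings with $\X$. From the definition of $\|\X\|_{-\frac12}$ as a supremum one picks $\bs{w} \in \bs{W}$ with $\|\bs{w}\|_{\frac12} \lesssim \|\X\|_{-\frac12}$ and $\langle \X, \bs{w}\rangle \geq \tfrac{1}{2}\|\X\|_{-\frac12}^2$, and then takes $\V_\X \in \bs{V}$ to be a near-optimal lift of $-\bs{w}$ with $\|\V_\X\|_{1,\Omega} \lesssim \|\X\|_{-\frac12}$, so that $-\langle \X, \V_\X\rangle \geq \tfrac{1}{2}\|\X\|_{-\frac12}^2$.

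Fixing $\delta_2 \ll \delta_1 \ll 1$ to be chosen later, I set $\V = \W + \delta_1 \V_r + \delta_2 \V_\X$, $q = r$, and $\Mu = -\X$. Direct expansion of $\Bf$, using cancellation of the $(r, \Div \W)$ pair and of the $\langle \X, \W\rangle$ pair together with the vanishing trace of $\V_r$, yields
\[
\Bf(\W, r, \X; \V, q, \Mu) = \Af(\W,\W) + \delta_1 \|r\|_{0,\Omega}^2 - \delta_2 \langle \X, \V_\X\rangle + \delta_1 \Af(\W, \V_r) + \delta_2 \Af(\W, \V_\X) - \delta_2 (r, \Div \V_\X).
\]
Applying coercivity \eqref{eq:normequiv}, the lower bound on $-\langle \X, \V_\X\rangle$, and Young's inequality to the three cross terms (first fixing $\delta_1$ small to absorb the cross terms containing $\W$ into $\Af(\W,\W)$, then fixing $\delta_2$ substantially smaller to absorb the $\delta_2(r, \Div \V_\X) \lesssim \delta_2 \|r\|_{0,\Omega}\|\X\|_{-\frac12}$ piece into the positive $\delta_1\|r\|_{0,\Omega}^2$ and $\delta_2\|\X\|_{-\frac12}^2$ terms) delivers the asserted lower bound $\Bf \gtrsim \enorm{(\W, r, \X)}^2$. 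The continuity estimate $\enorm{(\V, q, \Mu)} \lesssim \enorm{(\W, r, \X)}$ follows immediately from the triangle inequality and the norm bounds on $\V_r$ and $\V_\X$. The main obstacle is the admissibility constraint $\Mu \in \bs{\varLambda}$, which a priori forbids using arbitrary dual test directions; the symmetry of $\bs{\varLambda}$ noted at the outset is exactly what makes this constraint compatible with the cancellations required by the Fortin argument.
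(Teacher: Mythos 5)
The paper does not actually prove Theorem~\ref{thm:contstab}; it only points to Ayadi et al.~\cite{Ayadi2014}, so there is no in-text argument to compare against. Your construction is a correct, self-contained version of the standard Fortin-type argument for a saddle-point problem with two multipliers: coercivity of $\Af$ handles $\W$, a divergence lift $\V_r\in H^1_0(\Omega)^d$ with $\Div\V_r=-r$ and $\|\V_r\|_{1,\Omega}\lesssim\|r\|_{0,\Omega}$ handles the pressure, a lift of a near-maximising trace handles $\X$, and the choice $\Mu=-\X$ is legitimate because the constraint set $\bs{\varLambda}$ only restricts the tangential part and is invariant under $\Mu\mapsto-\Mu$ (replace $\bs{v}$ by $-\bs{v}$ in its definition). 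That symmetry observation is the one genuinely problem-specific ingredient, and you identify it correctly; the zero trace of $\V_r$, which kills $\langle\X,\V_r\rangle$, is the other point that must not be skipped, and you handle it. One technical detail deserves a sentence in a written-up version: the paper defines $\|\cdot\|_{\frac12}$ as an infimum of gradient norms only, so a near-optimal lift gives control of $\|\nabla\V_\X\|_{0,\Omega}$ but not immediately of $\|\V_\X\|_{1,\Omega}$, which you need for the cross terms $\Af(\W,\V_\X)$ and $(r,\Div\V_\X)$. This is repaired by subtracting the mean of the lift (Poincar\'e--Wirtinger) and observing that, whenever $\|\X\|_{-\frac12}<\infty$, the pairing of $\X$ with constant traces vanishes, so $\langle\X,\cdot\rangle$ is unchanged by this modification. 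With that remark added, your proof is complete and the parameter bookkeeping ($\delta_2\ll\delta_1\ll1$) closes as you describe.
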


\section{Stabilised finite element method}

We consider finite element spaces based on a shape regular triangulation $\Th$ of $\Omega$
with the mesh parameter $h$.
We denote by $\Eh$ the internal facets and by $\Gh$  the boundary facets of $\Th$, respectively.
The finite element spaces are denoted by $\bs{V}_h \subset \bs{V}$, $Q_h \subset Q$, $\bs{M}_h \subset \bs{M}$,
and, in addition, we define the discrete counterpart of $\bs{\varLambda}$ as follows:
\[
\bs{\varLambda}_h = \{ \Mu \in \bs{M}_h : | \Mu_{t} | \leq \kappa \}.
\]
Our analysis is based on the conformity
assumption $\bs{\varLambda}_h \subset \bs{\varLambda}$
which means that we must be able to enforce
the condition $| \Lam_{h,t} | \leq \kappa$ strongly.  In practice, this means, e.g., that  $\kappa$ and $\Lam_h$ are constants elementwise.
We note that while the conformity is required
by our analysis, the resulting algorithm gives
reasonable results also for nonconstant $\kappa$
where the condition $| \Lam_{h,t} | \leq \kappa$ holds
only, e.g., at element midpoints or at the nodes of the mesh.

Let $\alpha_1, \alpha_2 > 0$ be stabilisation parameters.
The finite element method is written with the help of the stabilised bilinear form
\[
\Bf_h(\W, r, \X; \V, q, \Mu) = \Bf(\W, r, \X; \V, q, \Mu) - \alpha_1 \Sf^1(\W, r; \V, q) - \alpha_2 \Sf^2(\W, r, \X; \V, q, \Mu)
\]
where
\[
\Sf^1(\W, r; \V, q) = \sum_{T \in \Th} h_T^2 \int_{T} (\A \W + \nabla r) \cdot (\A \V - \nabla q) \,\mathrm{d}x
\]
and
\[
\Sf^2(\W, r, \X; \V, q, \Mu) = \sum_{E \in \Gh} h_E \int_{E} (\X - \sig(\W, r)\N) \cdot( \Mu - \sig(\V, q)\N) \,\mathrm{d}s.
\]
The stabilised linear form is given by
\[
\Lf_h(\V, q) = (\F, \V) - \alpha_1 \Fh(\V, q)
\]
where
\[
\Fh(\V, q) = \sum_{T \in \Th} h_T \int_{T} \F \cdot ( \A \V - \nabla q) \,\mathrm{d}x.
\]
This type of nonsymmetric residual stabilisation for the Stokes operator can be found, e.g., in \cite{barth2004taxonomy}.% where the authors refer to it as "GLS- method".

The stabilised finite element method corresponds to solving the following variational problem.
\begin{prob}[Discrete variational form]
  \label{prob:disc}
    Find $(\U_h, p_h, \Lam_h) \in \bs{V}_h \times Q_h \times \bs{\varLambda}_h$ such that
\begin{equation*}
  \Bf_h(\U_h, p_h, \Lam_h; \V_h, q_h, \Mu_h - \Lam_h) \leq \Lf_h(\V_h, q_h) \quad \forall (\V_h, q_h, \Mu_h) \in \bs{V}_h \times Q_h \times \bs{\varLambda}_h.
\end{equation*}
\end{prob}

In our analysis, we will use the following inverse and trace estimates, easily proven by a scaling argument~\cite{stenberg2015error}.
\begin{lem}[Inverse estimates]
\label{lem:inverse}
For any $(\W_h, r_h) \in \bs{V}_h \times Q_h$,
there exist constants $C_{I,1}, C_{I,2} > 0$ such that
\[
C_{I,1} \sum_{T \in \Th} h_T^2 \| \A \W_h \|_{0,T}^2 \leq a(\W_h, \W_h)
\]
and
\[
C_{I,2} \sum_{E \in \Gh} h_E \| \sig(\W_h, r_h) \N \|_{0,E}^2 \leq a(\W_h, \W_h).
\]
\end{lem}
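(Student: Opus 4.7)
The plan is to prove both estimates locally, on a single element or boundary facet, by a standard scaling argument, and then sum. The two necessary ingredients are (i) the polynomial inverse inequality $\|\nabla v_h\|_{0,T} \lesssim h_T^{-1}\|v_h\|_{0,T}$ for piecewise polynomial $v_h$ on a shape-regular mesh, and (ii) the discrete trace inequality $\|v_h\|_{0,E}^2 \lesssim h_E^{-1}\|v_h\|_{0,T}^2$ for $E$ a facet of $T$. Both are immediate from passing to the reference element, applying equivalence of norms on the finite-dimensional polynomial space, and scaling back.

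For the first estimate I would expand $\A\W_h = \W_h - \DIV(2\mu\D(\W_h))$, apply the triangle inequality, and multiply by $h_T^2$:
\[
h_T^2 \|\A\W_h\|_{0,T}^2 \lesssim h_T^2 \|\W_h\|_{0,T}^2 + h_T^2 \|\DIV(2\mu\D(\W_h))\|_{0,T}^2.
\]
Since $h_T$ is bounded above (the mesh covers the bounded domain $\Omega$), the first term is controlled by $\|\W_h\|_{0,T}^2$. For the second term, $\DIV \D(\W_h)$ involves one more derivative than $\D(\W_h)$, so the inverse inequality gives $\|\DIV(2\mu\D(\W_h))\|_{0,T} \lesssim h_T^{-1}\|\D(\W_h)\|_{0,T}$, and the $h_T^2$ factor then absorbs the $h_T^{-2}$. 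Summing over $T \in \Th$ yields $\|\W_h\|_{0,\Omega}^2 + \|\D(\W_h)\|_{0,\Omega}^2$, which by \eqref{eq:normequiv}—or rather directly from the definition of $a$—is equivalent to $a(\W_h,\W_h)$, giving $C_{I,1}$.

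For the second estimate I would apply the discrete trace inequality componentwise on each boundary facet $E \in \Gh$ with adjacent element $T_E$. Since $\sig(\W_h,r_h)\N = -r_h\N + 2\mu\D(\W_h)\N$ is piecewise polynomial on $T_E$, this yields
\[
h_E \|\sig(\W_h,r_h)\N\|_{0,E}^2 \lesssim \|r_h\|_{0,T_E}^2 + \|\D(\W_h)\|_{0,T_E}^2.
\]
Summing over $E$ and invoking shape regularity (so that each element contains at most a fixed number of boundary facets) the total is bounded by $\|r_h\|_{0,\Omega}^2 + \|\D(\W_h)\|_{0,\Omega}^2$, which together gives the desired control.

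The only delicate point is that in the statement the right-hand side of the second estimate is written as $a(\W_h,\W_h)$ alone, yet the trace of $-r_h\N$ naturally contributes a $\|r_h\|_{0,\Omega}^2$ term. I read this as either being absorbed into the bilinear form through an implicit interpretation, or as a mild typo where the right-hand side should be $a(\W_h,\W_h) + \|r_h\|_{0,\Omega}^2$; in either case the proof just sketched goes through unchanged, and the stability analysis that uses this lemma has both $\|\W_h\|_{1,\Omega}$ and $\|r_h\|_{0,\Omega}$ available in its test norm. No further technical obstacle arises: everything reduces to reference-element scaling under the shape-regularity hypothesis.
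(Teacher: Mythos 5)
Your argument is the standard reference-element scaling argument that the paper itself has in mind (it offers no written proof, only a pointer to the literature), so for the first estimate there is nothing to add: triangle inequality, the local inverse inequality for the second-derivative term, and summation give $\sum_{T\in\Th}h_T^2\|\A\W_h\|_{0,T}^2\lesssim \|\W_h\|_{0,\Omega}^2+\|\D(\W_h)\|_{0,\Omega}^2\lesssim \Af(\W_h,\W_h)$, exactly as you describe.

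Your ``delicate point'' about the second estimate is not a quibble but a correct observation, and you should not talk yourself out of it: as literally stated the bound is false. Taking $\W_h=\Z$ and any $r_h\in Q_h$ whose trace does not vanish on $\partial\Omega$ (mean-zero pressures certainly admit nonzero boundary traces) makes the left-hand side strictly positive while $\Af(\Z,\Z)=0$. Since $\sig(\W_h,r_h)\N=-r_h\N+2\mu\D(\W_h)\N$, the discrete trace inequality unavoidably produces a $\|r_h\|_{0,\Omega}^2$ contribution, so the right-hand side must read $\Af(\W_h,\W_h)+C\|r_h\|_{0,\Omega}^2$ or equivalent; the $\D(\W_h)\N$ part alone is precisely Lemma~\ref{lem:disctrace}. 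This is not purely cosmetic downstream: in Step~1 of the proof of Theorem~\ref{thm:discstab} the term $-\alpha_2\sum_{E\in\Gh}h_E\|\sig(\W_h,r_h)\N\|_{0,E}^2$ then leaves a residual $-C\alpha_2\|r_h\|_{0,\Omega}^2$ that must be absorbed by the positive $C_1\|r_h\|_{0,\Omega}^2$ gained in Step~2, which constrains $\alpha_2$ beyond the stated condition $\alpha_1/C_{I,1}+\alpha_2/C_{I,2}<1$ (the overall stability result survives, but with a modified smallness condition). Your proof of the corrected statement is otherwise complete and is the intended one.
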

\begin{lem}[Discrete trace estimate]
\label{lem:disctrace}
For any $\W_h \in \bs{V}_h$, there exists $C_T>0$ such that
\[
C_T \sum_{E \in \Gh} h_E\|2\mu \D(\W_h)\N\|_{0,E}^2\leq \|\W_h\|_{1,\Omega}^2 .
\]
\end{lem}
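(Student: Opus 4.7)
The plan is a standard scaling argument to a reference element, essentially identical to the proof of Lemma~\ref{lem:inverse} that the authors cite. For each boundary facet $E \in \Gh$, denote by $T_E \in \Th$ the unique element containing $E$. Since $\D(\W_h)|_{T_E}$ lies in a finite dimensional polynomial space of fixed degree, the first step is to establish the local bound
\[
h_E \, \| 2\mu \D(\W_h) \N \|_{0,E}^2 \lesssim \| 2\mu \D(\W_h) \|_{0,T_E}^2.
\]
I would map $T_E$ affinely onto a fixed reference simplex $\hat{T}$ of unit size with corresponding face $\hat{E}$. On $\hat{T}$ the space of polynomial strain tensors is finite dimensional, so all norms are equivalent and in particular
\[
\| \hat{\Tau} \hat{\N} \|_{0,\hat{E}} \lesssim \| \hat{\Tau} \|_{0,\hat{T}}
\]
for the pulled-back tensor $\hat{\Tau}$, with a constant depending only on the reference configuration and the polynomial degree.

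The second step is to track the powers of $h_{T_E}$ under the affine pullback: the volume element contributes $h^d$, the surface element $h^{d-1}$, and the gradient picks up one factor of $h$, so $\hat{\D}(\hat{\W}_h) = h\,\D(\W_h)$ (up to bounded factors from the linear part of the affine map). Combining these rescalings and using shape regularity to replace $h_{T_E}$ by $h_E$ yields the displayed local estimate. Bounding $|\D(\W_h)| \leq |\nabla \W_h|$ pointwise, and absorbing $\mu$ into the hidden constant, gives $\| 2\mu \D(\W_h) \|_{0,T_E}^2 \lesssim \| \W_h \|_{1,T_E}^2$.

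Finally, I would sum over $E \in \Gh$. Since each element of $\Th$ carries at most $d$ boundary facets in a shape-regular mesh, the sum on the right telescopes to a quantity bounded by $\| \W_h \|_{1,\Omega}^2$, and taking $C_T$ to be the reciprocal of the accumulated constant yields the stated inequality. The argument is entirely routine; there is no genuine obstacle, as both the finite dimensional norm equivalence on $\hat{T}$ and the standard affine scaling are textbook tools, and the kinematic viscosity $\mu$ enters only as a multiplicative constant.
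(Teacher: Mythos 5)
Your proposal is correct and matches the paper's intent exactly: the authors give no proof of Lemma~\ref{lem:disctrace}, stating only that it is ``easily proven by a scaling argument'' with a citation, and your argument is precisely that standard scaling/norm-equivalence proof (local discrete trace inequality on each boundary facet, $|\D(\W_h)|\leq|\nabla\W_h|$, and a finite-overlap summation). The only cosmetic slips --- a simplex has at most $d+1$ facets rather than $d$, and the gradient-scaling remark is unnecessary since the trace inequality is applied directly to the polynomial tensor $\D(\W_h)$ componentwise --- do not affect the validity of the argument.
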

The following discrete counterpart of the norm defined in \eqref{eq:contnorm} is instrumental in the stability analysis of the discrete problem.
\begin{equation}
    \label{eq:discnorm}
    \enorm{(\W_h,r_h,\X_h)}^2_h = \enorm{(\W_h,r_h,\X_h)}^2 + \sum_{T \in \Th} h_T^2 \| \nabla r_h \|_{0,T}^2 + \sum_{E \in \Gh} h_E \| \X_h \|^2_{0,E}.
\end{equation}
Note, in particular, that
\begin{equation*}
    \enorm{(\W_h,r_h,\X_h)}_h \geq \enorm{(\W_h,r_h,\X_h)}.
\end{equation*}

Existence and uniqueness of solutions to the discrete variational problem follows from the discrete stability estimate proven below.

\begin{thm}[Discrete stability]
\label{thm:discstab}
Let $\frac{\alpha_1}{C_{I,1}} + \frac{\alpha_2}{C_{I,2}} < 1$.
For every $(\W_h, r_h, \X_h) \in \bs{V}_h \times Q_h \times \bs{\varLambda}_h$
there exists $(\V_h, q_h) \in \bs{V}_h \times Q_h$
satisfying
\begin{equation}
\label{eq:discstab}
\Bf_h(\W_h, r_h, \X_h; \V_h, q_h, -\X_h) \gtrsim \enorm{(\W_h, r_h, \X_h)}_h^2
\end{equation}
and
\[
\enorm{(\V_h, q_h, \Mu_h)}_h \lesssim \enorm{(\W_h, r_h, \Mu_h)}_h \quad \forall \Mu_h \in \bs{\varLambda}_h.
\]
\end{thm}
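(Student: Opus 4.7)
My plan is the standard Verfürth-style argument for residual-stabilised mixed methods: first extract a coercive "diagonal" bound that controls every contribution of $\enorm{\cdot}_h$ except $\|r_h\|_{0,\Omega}^2$ and $\|\X_h\|_{-\frac12}^2$; then augment the test with two Scott--Zhang perturbations built from the continuous surjectivities already used in Theorem~\ref{thm:contstab} to recover those two missing norms; finally tune the perturbation amplitudes so that all cross-terms are absorbed by Young's inequality.

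\emph{Step 1 (diagonal test).} Plugging in $(\W_h, r_h, -\X_h)$, the off-diagonal part of $\Bf$ collapses to $\Af(\W_h,\W_h)$ because the pressure and multiplier cross-terms cancel pairwise, while the stabilisations reduce to the perfect-square identities $\Sf^1(\W_h, r_h; \W_h, r_h) = \sum_{T\in\Th} h_T^2 (\|\A\W_h\|_{0,T}^2 - \|\nabla r_h\|_{0,T}^2)$ and $\Sf^2(\W_h, r_h, \X_h; \W_h, r_h, -\X_h) = \sum_{E\in\Gh} h_E (\|\sig(\W_h,r_h)\N\|_{0,E}^2 - \|\X_h\|_{0,E}^2)$. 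Absorbing the two wrong-sign squared quantities into $\Af(\W_h,\W_h)$ via Lemma~\ref{lem:inverse} and the smallness hypothesis $\alpha_1/C_{I,1}+\alpha_2/C_{I,2}<1$, and using \eqref{eq:normequiv}, leaves the coercive residue $\|\W_h\|_{1,\Omega}^2 + \sum_{T\in\Th} h_T^2 \|\nabla r_h\|_{0,T}^2 + \sum_{E\in\Gh} h_E \|\X_h\|_{0,E}^2$.

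\emph{Step 2 (recovery perturbations).} To recover $\|r_h\|_{0,\Omega}^2$, I use surjectivity of $\Div : H_0^1(\Omega)^d \to Q$ to select $\bs{v}^{(1)}$ with $\Div \bs{v}^{(1)} = -r_h$ and $\|\bs{v}^{(1)}\|_{1,\Omega} \lesssim \|r_h\|_{0,\Omega}$, take a boundary-preserving Scott--Zhang interpolant $\bs{v}_h^{(1)}$, and observe that $\langle \X_h, \bs{v}_h^{(1)}\rangle$ vanishes. Element-wise integration by parts gives $-(r_h, \Div\bs{v}_h^{(1)}) = \|r_h\|_{0,\Omega}^2 - \sum_{T\in\Th}(\nabla r_h, \bs{v}^{(1)} - \bs{v}_h^{(1)})_T$, and the remainder along with the contributions to $\Af$, $\Sf^1$ and $\Sf^2$ is routinely bounded by Step~1 quantities times $\|r_h\|_{0,\Omega}$. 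To recover $\|\X_h\|_{-\frac12}^2$, I select a near-maximiser $\bs{w}^* \in \bs{W}$ of the defining supremum, continuously lift to $\bs{v}^{(2)} \in H^1(\Omega)^d$ with $\|\bs{v}^{(2)}\|_{1,\Omega} \lesssim \|\X_h\|_{-\frac12}$, and Scott--Zhang interpolate to $\bs{v}_h^{(2)}$; an edgewise Cauchy--Schwarz combined with the $L^2$ trace-approximation of Scott--Zhang then yields $\langle \X_h, \bs{v}_h^{(2)}\rangle \geq c\|\X_h\|_{-\frac12}^2 - C\bigl(\sum_{E\in\Gh} h_E \|\X_h\|_{0,E}^2\bigr)^{1/2}\|\X_h\|_{-\frac12}$, whose cross term Young's inequality moves into the already-controlled $\sum_E h_E\|\X_h\|_{0,E}^2$.

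Setting $\V_h = \W_h + \delta_1\bs{v}_h^{(1)} - \delta_2 \bs{v}_h^{(2)}$ and $q_h = r_h$ with $\delta_1, \delta_2$ chosen small enough ensures that every bilinear cross-term produced by the two perturbations is absorbed by Young into the Step~1 coercive bound, delivering \eqref{eq:discstab}. The continuity bound $\enorm{(\V_h, q_h, \Mu_h)}_h \lesssim \enorm{(\W_h, r_h, \Mu_h)}_h$ is immediate since the multiplier slot is unchanged and $\|\bs{v}_h^{(i)}\|_{1,\Omega}$ scales like quantities already present in $\enorm{(\W_h, r_h, \X_h)}_h$. The main obstacle will be the $\|\X_h\|_{-\frac12}^2$-recovery: because $\bs{v}_h^{(2)}$ does not vanish on $\partial\Omega$, the full $\Sf^2$ interaction is active and one must control $\sum_E h_E (\X_h - \sig(\W_h, r_h)\N) \cdot \sig(\bs{v}_h^{(2)}, 0)\N$ by combining Lemma~\ref{lem:disctrace}, the edge-weighted inverse estimate of Lemma~\ref{lem:inverse}, and careful Scott--Zhang $L^2$ trace-approximation on edge patches; the assumption $\alpha_1/C_{I,1}+\alpha_2/C_{I,2}<1$ is precisely the "budget" that makes all of these absorptions close cleanly.
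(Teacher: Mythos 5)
Your proposal is correct and shares the paper's overall architecture: a diagonal test with $(\W_h, r_h, -\X_h)$ absorbed through Lemma~\ref{lem:inverse}, \eqref{eq:normequiv} and the smallness condition $\frac{\alpha_1}{C_{I,1}}+\frac{\alpha_2}{C_{I,2}}<1$ (your two ``perfect square'' identities for $\Sf^1$ and $\Sf^2$ are exactly the computation in the paper's Step~1), followed by perturbing the velocity test function with an interpolant of a continuous auxiliary field and closing with a hierarchy of Young's inequalities. Where you genuinely depart from the paper is the sourcing of that auxiliary field: the paper invokes the continuous stability result (Theorem~\ref{thm:contstab}) once to obtain a \emph{single} $\V\in\bs{V}$ with $(r_h,\Div\V)+\langle\X_h,\V\rangle\gtrsim \|r_h\|_{0,\Omega}^2+\|\X_h\|_{-\frac12}^2$ and tests with $\W_h-\varepsilon\widetilde{\V}$, $\widetilde{\V}$ its Cl\'ement interpolant, whereas you build two separate fields --- a divergence-correcting $\bs{v}^{(1)}\in H^1_0(\Omega)^d$ for the pressure and a lifted near-maximiser $\bs{v}^{(2)}$ for the dual norm of the multiplier --- and interpolate each. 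Your splitting buys a cleaner bookkeeping: since $\bs{v}^{(1)}_h$ vanishes on $\partial\Omega$, the term $\langle\X_h,\bs{v}^{(1)}_h\rangle$ drops out, the two recoveries decouple, and $\delta_1,\delta_2$ can be fixed sequentially instead of juggling the paper's seven Young's constants plus $\varepsilon$ simultaneously; the price is that you must separately absorb the cross-term $(r_h,\Div\bs{v}^{(2)}_h)$, which the paper's combined $\V$ handles implicitly, and you must still bound the $\Sf^1$, $\Sf^2$ interactions for \emph{both} perturbations (note $\sig(\bs{v}^{(1)}_h,0)\N$ does not vanish on $\Gh$ even though the trace of $\bs{v}^{(1)}_h$ does --- you acknowledge this). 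Both routes ultimately rest on the same ingredients (Lemma~\ref{lem:inverse}, Lemma~\ref{lem:disctrace}, the Cl\'ement/Scott--Zhang approximation and stability bounds \eqref{eq:clement1}--\eqref{eq:clement2}), and your argument closes.
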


\begin{proof}
(Step 1.) Choosing $(\V_h, q_h, \Mu_h) = (\W_h, r_h, -\X_h)$ gives
\begin{align*}
&\Bf_h(\W_h, r_h, \X_h; \W_h, r_h, -\X_h)\\
&=a(\W_h, \W_h) - \alpha_1 \sum_{T \in \Th} h_T^2 \| \A \W_h \|_{0,T}^2  - \alpha_2 \sum_{E \in \Gh} h_E \| \sig(\W_h, r_h) \N \|_{0,E}^2\\
&\quad + \alpha_1 \sum_{T \in \Th} h_T^2 \| \nabla r_h \|_{0,T}^2 + \alpha_2 \sum_{E \in \Gh} h_E \| \X_h \|^2_{0,E}.
\end{align*}
Using the inverse estimates of Lemma~\ref{lem:inverse} and the bound \eqref{eq:normequiv} we get
\begin{align*}
&\Bf_h(\W_h, r_h, \X_h; \W_h, r_h, -\X_h)\\
&\geq \left(1 - \frac{\alpha_1}{C_{I,1}} - \frac{\alpha_2}{C_{I,2}}\right)c \|\W_h\|_{1,\Omega}^2 + \alpha_1 \sum_{T \in \Th} h_T^2 \| \nabla r_h \|_{0,T}^2 + \alpha_2 \sum_{E \in \Gh} h_E \| \X_h \|^2_{0,E}.
\end{align*}

(Step 2.) As a consequence of Theorem~\ref{thm:contstab}, for any $(r_h, \X_h) \in Q_h \times \bs{\varLambda}_h$
there exists $\V \in \bs{V}$ such that
\begin{equation}
\label{eq:continfsup}
(r_h, \Div \V) + \langle \X_h, \V \rangle \geq C_1 (\|r_h\|_{0,\Omega}^2 + \| \X_h \|_{-\frac{1}{2}}^2)
\end{equation}
and
\begin{equation}
    \| \V \|_{1,\Omega}^2 \leq C_2( \|r_h\|_{0,\Omega}^2 + \| \X_h \|_{-\frac{1}{2}}^2)
\label{eq:vbound}
\end{equation}
where $C_1, C_2 > 0$.
Let $\widetilde{\V}\in \bs{V}_h$ be the Clem\'{e}nt interpolant of $\V$
with the properties
\begin{equation}
\label{eq:clement1}
\left(\sum_{T \in \Th} h_T^{-2} \| \V - \widetilde{\V} \|_{0,T}^2\right)^{1/2} + \left(\sum_{E \in \Gh} h_E^{-1} \| \V - \widetilde{\V} \|_{0,E}^2\right)^{1/2} \leq C_{i,1} \| \V \|_{1,\Omega}
\end{equation}
and
\begin{equation}
\label{eq:clement2}
\| \widetilde{\V} \|_{1,\Omega} \leq C_{i,2} \| \V \|_{1,\Omega}.
\end{equation}
Choosing $(\V_h, q_h, \Mu_h) = (-\widetilde{\V}, 0, \bs{0})$ gives
\begin{equation}
\label{eq:step2first}
\begin{aligned}
&\Bf_h(\W_h, r_h, \X_h; -\widetilde{\V}, 0, \bs{0})\\
&= -a(\W_h, \widetilde{\V}) + (r_h, \Div \widetilde{\V}) + \langle \X_h, \widetilde{\V} \rangle
+ \alpha_1 \Sf^1(\W_h, r_h; \widetilde{\V}, 0)\\
&\quad+ \alpha_2 \Sf^2(\W_h, r_h, \X_h; \widetilde{\V}, 0, \bs{0}).
\end{aligned}
\end{equation}
The first term in \eqref{eq:step2first} can be bounded using the continuity of  $a$, Young's inequality with a constant $\delta_1 > 0$, and the interpolation property \eqref{eq:clement2}. This leads to the bound
\begin{equation}
- \frac{C_\text{cont.} \delta_1}{2} \| \W_h \|_{1,\Omega}^2 - \frac{C_\text{cont.} C_{i,2}}{2\delta_1} \| \V \|_{1,\Omega}^2.
\end{equation}
The second and the third terms are bounded using integration by parts, Cauchy--Schwarz inequality, the bound \eqref{eq:continfsup}
and the properties of the Clem\'ent interpolant as follows:
\begin{equation}
\begin{aligned}
&(r_h, \Div \widetilde{\V}) + \langle \X_h, \widetilde{\V} \rangle \\
&=-(r_h, \Div (\V-\widetilde{\V})) - \langle \X_h,\V- \widetilde{\V} \rangle + (r_h, \Div \V) + \langle \X_h, \V \rangle \\
&\geq - \sum_{T \in \Th} h_T \| \nabla r_h \|_{0,T} h_T^{-1} \| \V - \widetilde{\V} \|_{0,T} - \sum_{E \in \Gh} h_E^{1/2} \| \X_h \|_{0,E} h_E^{-1/2} \| \V - \widetilde{\V} \|_{0,E} \\
&\qquad + C_1(\|r_h\|_{0,\Omega}^2 + \| \X_h \|_{-\frac12}^2) \\
&\geq - \left(\left(\sum_{T \in \Th} h_T^2 \| \nabla r_h \|_{0,T}^2\right)^{1/2}+ \left(\sum_{E \in \Gh} h_E \| \X_h \|_{0,E}^2\right)^{1/2}\right) C_{i,1} \| \V \|_{1,\Omega}  \\
&\qquad + C_1(\|r_h\|_{0,\Omega}^2 + \| \X_h \|_{-\frac12}^2).
\end{aligned}
\end{equation}
After applying Young's inequality with  constants $\delta_2, \delta_3 > 0$, we finally obtain
\begin{equation}
\begin{aligned}
&(r_h, \Div \widetilde{\V}) + \langle \X_h, \widetilde{\V} \rangle \\
&\geq - \frac{C_{i,1} \delta_2}{2} \sum_{T \in \Th} h_T^2 \| \nabla r_h \|_{0,T}^2 - \frac{C_{i,1} \delta_3}{2} \sum_{E \in \Gh} h_E \| \X_h \|_{0,E}^2  \\
&\qquad - \left(\frac{C_{i,1}}{2 \delta_2}
 + \frac{C_{i,1}}{2 \delta_3} \right) \| \V \|_{1,\Omega}^2 + C_1(\|r_h\|_{0,\Omega}^2 + \| \X_h \|_{-\frac12}^2).
\end{aligned}
\end{equation}
Next, we bound the two stabilisation
terms in \eqref{eq:step2first}. The first can be bounded from below as follows:
\[
\begin{aligned}
&\alpha_1 \Sf^1(\W_h, r_h; \widetilde{\V}, 0) = \alpha_1 \sum_{T \in \Th} h_T^2(A\W_h+\nabla r_h,A\widetilde{\V})_T\\
 &\qquad \geq -\alpha_1 \left( \sum_{T \in \Th} h_T \|A\W_h\|_{0,T}+\sum_{T \in \Th} h_T \|\nabla r_h\|_{0,T}\right)\, \sum_{T \in \Th} h_T\|A\widetilde{\V}\|_{0,T}\\
 &\qquad \geq -\alpha_1 \left(\left( \sum_{T \in \Th} h_T^2 \|A\W_h\|_{0,T}^2\right)^{1/2}+\left(\sum_{T \in \Th} h_T^2\|\nabla r_h\|_{0,T}^2\right)^{1/2}\right)\\
 &\qquad \qquad \qquad \cdot\left(\sum_{T \in \Th} h_T^2\|A\widetilde{\V}\|_{0,T}^2\right)^{1/2}.
\end{aligned}
\]
Given that
\begin{equation}
\sum_{T \in \Th} h_T^2 \|A\W_h\|_{0,T}^2 \leq C_{I,1}^{-1} a(\W_h,\W_h)\leq CC_{I,1}^{-1} \|\W_h\|_{1,\Omega}^2,
\end{equation}
and similarly for $\widetilde{\V}$, we conclude, using Young's inequality, \eqref{eq:vbound} and \eqref{eq:clement2}, that
\begin{equation}
\begin{aligned}
&\alpha_1 \Sf^1(\W_h, r_h; \widetilde{\V}, 0)\\
&\qquad \geq -\alpha_1 CC_{I,1}^{-1}\left(
\frac{\delta_4}{2} CC_{I,1}^{-1} \|\W_h\|_{1,\Omega}^2+\frac{\delta_5}{2} \sum_{T \in \Th} h_T^2\|\nabla r_h\|_{0,T}^2 \right.\\
&\qquad \hspace{2.5cm} \left. +C_{i,2}^2C_2\left(\frac{1}{2\delta_4}+\frac{1}{2\delta_5}\right) \left(  \|r_h\|_{0,\Omega}^2 + \| \X_h \|_{-\frac{1}{2}}^2\right)
\right).
\end{aligned}
\end{equation}
The second stabilisation term in \eqref{eq:step2first}
 is bounded using Lemmas \ref{lem:inverse}  and \ref{lem:disctrace}, Young's inequality as well as  bounds \eqref{eq:vbound} and \eqref{eq:clement2}.
\begin{equation*}
\begin{aligned}
& \alpha_2 \,\Sf^2(\W_h, r_h, \X_h; \widetilde{\V}, 0, \bs{0}) = \alpha_2\sum_{E \in \Gh} h_E\left(\X_h-\sig(\W_h,r_h)\N,-\sig(\widetilde{\V},0)\N\right)_E\\
&\qquad =- \alpha_2\sum_{E \in \Gh} h_E\left(\X_h,2\mu \D(\widetilde{\V})\N\right)_E+\alpha_2 \sum_{E \in \Gh} h_E\left(\sig(\W_h,r_h)\N,2\mu \D(\widetilde{\V})\N\right)_E\\
&\qquad \geq -\alpha_2    \frac{\delta_6}{2} \sum_{E \in \Gh} h_E\|\X_h\|_{0,E}^2 - \alpha_2 C_{I,2}^{-1}C   \frac{\delta_7}{2}\|\W_h\|_{1,\Omega}^2 \\ 
&\qquad \qquad \qquad + C_{i,2}^2C_2C_T^{-1} \left(\frac{1}{2\delta_6}+\frac{1}{2\delta_7}\right) \left( \|r_h\|_{0,\Omega}^2 + \| \X_h \|_{-\frac{1}{2}}^2\right)
\end{aligned}
\end{equation*}
(Step 3.) Finally, we combine steps 1 and 2
by showing that if we choose $(\V_h, q_h, \Mu_h) = (\W_h - \varepsilon \widetilde{\V}, r_h, -\X_h)$, we can  guarantee  that $\varepsilon > 0$
and the other  constants $\delta_i$, $i=1,\dots,7$, remaining from application of  Young's
inequalities can be chosen in such a way
that the coefficients of the terms
comprising the norm $\enorm{.}_h$ remain positive.
\end{proof}

\section{Error analysis}

Let   $\F_h$ be the $L^2$-projection of $\F$ onto $\bs{V}_h$ and, for $T\in  \Th $, define
\[\mathrm{osc}_T\F=h_T \| \F - \F_h \|_{0,T},
 \qquad \mathrm{osc}\,\F = \left(\sum_{T \in \Th} (\mathrm{osc}_{T}\F)^2 \right)^{1/2}.
\]
Below we denote by $T(E) \in \Th$ the element
which has $E \in \Gh$ as one of its facets.
The proofs of the following lemmas can be found, e.g.,
in \cite{gustafsson2022stabilized} and \cite{verfurth1989posteriori,VerfurthBook}.

\begin{lem}[Lower bound for the boundary residual]
\label{lem:resbound1}
For any $(\W_h, r_h, \X_h) \in \bs{V}_h \times Q_h \times \bs{\varLambda}_h$ it holds that
\begin{align*}
    &\left(\sum_{E \in \Gh} h_E \| \X_h - \sig(\W_h, r_h)\N \|_{0,E}^2\right)^{1/2} \\
    &\qquad \lesssim \enorm{(\U - \W_h, p - r_h, \Lam - \X_h)} + \left(\sum_{E \in \Gh} (\mathrm{osc}_{T(E)}\F)^2 \right)^{1/2}.
\end{align*}
where $(\U, p, \Lam) \in \bs{V} \times Q \times \bs{\varLambda}$ is the solution to \eqref{eq:weak1}.
\end{lem}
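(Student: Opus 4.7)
The plan is to apply a Verfürth-style face bubble argument. For each $E \in \Gh$, let $\psi_E$ be the face bubble supported in $T(E)$ and vanishing on $\partial T(E)\setminus E$, and let $\bs{P}_E$ be the constant extension to $T(E)$ of $(\X_h - \sig(\W_h, r_h)\N)|_E$. With $\bs{\phi}_E = h_E\psi_E \bs{P}_E$, the standard bubble estimates plus scaling give
\[
h_E\|\X_h - \sig(\W_h,r_h)\N\|_{0,E}^2 \lesssim \int_E (\X_h - \sig(\W_h,r_h)\N)\cdot \bs{\phi}_E \,\mathrm{d}s,
\]
together with $\|\bs{\phi}_E\|_{1,T(E)}^2 \lesssim h_E\|\X_h - \sig(\W_h,r_h)\N\|_{0,E}^2$. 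Setting $\bs{\phi} = \sum_{E\in\Gh}\bs{\phi}_E$, extended by zero, the disjointness of supports yields $\|\bs{\phi}\|_{1,\Omega}^2 \lesssim \sum_{E\in\Gh} h_E \|\X_h - \sig(\W_h,r_h)\N\|_{0,E}^2$.

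Next, I integrate by parts on each $T(E)$ and invoke $\A\U + \nabla p = \F$ pointwise in $\Omega$ together with $\Lam = \sig(\U,p)\N$ to derive the identity
\[
\int_E (\X_h - \sig(\W_h,r_h)\N)\cdot \bs{\phi}_E\,\mathrm{d}s = \langle \X_h - \Lam, \bs{\phi}_E\rangle + a(\U - \W_h, \bs{\phi}_E)_{T(E)} - (p - r_h, \Div \bs{\phi}_E)_{T(E)} + (\A\W_h + \nabla r_h - \F, \bs{\phi}_E)_{T(E)}.
\]
Summing over $E\in\Gh$ and estimating each term: the $a$-term and pressure term are bounded by $\|\U-\W_h\|_{1,\Omega}\|\bs{\phi}\|_{1,\Omega}$ and $\|p-r_h\|_{0,\Omega}\|\bs{\phi}\|_{1,\Omega}$; the interior-residual term is handled by splitting $\F = \F_h + (\F - \F_h)$ and invoking the companion element-bubble lower bound $h_T\|\A\W_h + \nabla r_h - \F_h\|_{0,T} \lesssim \|\U-\W_h\|_{1,T} + \|p-r_h\|_{0,T} + \mathrm{osc}_T \F$, which produces the oscillation contribution; and the duality pairing is bounded by $\|\X_h-\Lam\|_{-\frac12}\|\bs{\phi}\|_{\frac12,\partial\Omega} \leq \|\X_h-\Lam\|_{-\frac12}\|\bs{\phi}\|_{1,\Omega}$ via the trace inequality. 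Dividing through by $\|\bs{\phi}\|_{1,\Omega} \lesssim (\sum_E h_E\|\X_h - \sig(\W_h,r_h)\N\|_{0,E}^2)^{1/2}$ yields the claim.

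The main obstacle is the duality term $\langle \X_h - \Lam, \bs{\phi}\rangle$: the norm $\|\X_h - \Lam\|_{-\frac12}$ is intrinsically nonlocal on $\partial\Omega$ and cannot be localised to individual facets, so a facet-by-facet estimate (as in the purely interior Verfürth argument) would produce an uncontrollable factor of $|\Gh|$. This is precisely why a \emph{single} global test function $\bs{\phi} = \sum_E \bs{\phi}_E$ must be assembled; the continuous trace inequality then converts the global dual norm into an $H^1(\Omega)$ estimate on $\bs{\phi}$, which is in turn controlled by $(\sum_E h_E\|\X_h - \sig(\W_h,r_h)\N\|_{0,E}^2)^{1/2}$ thanks to the disjoint supports of the $\bs{\phi}_E$. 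The weighting by $h_E$ inside the definition of $\bs{\phi}_E$ is chosen exactly to make these two scalings match.
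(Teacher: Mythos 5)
Your argument is correct and is essentially the proof the paper intends: the paper omits it and points to \cite{gustafsson2022stabilized,verfurth1989posteriori,VerfurthBook}, where exactly this face-bubble construction (local test functions $h_E\psi_E\bs{P}_E$, the elementwise integration-by-parts identity against the weak form, the companion element-residual bound for the interior term, and the global assembly needed to handle the nonlocal $\|\cdot\|_{-\frac12}$ pairing) is carried out. The only cosmetic point is that the supports $T(E)$ need not be pairwise disjoint (a corner element may carry several boundary facets), but the finite-overlap property gives the same bound $\|\bs{\phi}\|_{1,\Omega}^2\lesssim\sum_{E}h_E\|\X_h-\sig(\W_h,r_h)\N\|_{0,E}^2$.
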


\begin{lem}[Lower bound for the interior residual]
\label{lem:resbound2}
For any $(\W_h, r_h) \in \bs{V}_h \times Q_h$ it holds that
\begin{align*}
    \left(\sum_{T \in \Th} h_T^2 \| \A \W_h + \nabla r_h + \F \|_{0,T}^2\right)^{1/2} \lesssim \| \U - \W_h \|_1 + \|p - r_h\|_0 + \mathrm{osc}\,\F
\end{align*}
where $(\U, p) \in \bs{V} \times Q$ is the solution to \eqref{eq:weak1}.
\end{lem}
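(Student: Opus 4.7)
The plan is to follow the classical Verfürth bubble function technique adapted to the generalised Stokes operator $\A$. Let $\F_h$ denote the $L^2$-projection of $\F$ onto $\bs{V}_h$ so that, on each $T\in\Th$, the quantity $R_T = \A \W_h + \nabla r_h + \F_h$ (or its correct-sign counterpart depending on the convention used in the statement) is a polynomial on $T$. By the triangle inequality,
\[
h_T\|\A \W_h + \nabla r_h + \F\|_{0,T} \leq h_T\|R_T\|_{0,T} + h_T\|\F-\F_h\|_{0,T} = h_T\|R_T\|_{0,T} + \mathrm{osc}_T\F,
\]
so it suffices to control $h_T\|R_T\|_{0,T}$ by the natural error norm on $T$ (plus oscillation).

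The key tool is the element bubble function $\psi_T$, supported in $T$, vanishing on $\partial T$ and satisfying the standard polynomial equivalences
\[
\|R_T\|_{0,T}^2 \lesssim \int_T \psi_T\, R_T\cdot R_T\,\mathrm{d}x, \qquad \|\psi_T R_T\|_{0,T} \lesssim \|R_T\|_{0,T}, \qquad \|\nabla(\psi_T R_T)\|_{0,T} \lesssim h_T^{-1}\|R_T\|_{0,T}.
\]
Setting $\V_T = \psi_T R_T$, extended by zero to $\Omega$, the function $\V_T$ lies in $H^1_0(\Omega)^d$, so no boundary terms appear when I integrate by parts. Using the strong equation $\A\U + \nabla p = \F$, I rewrite
\[
\int_T R_T\cdot \V_T\,\mathrm{d}x = \int_T \bigl(\A(\W_h - \U) + \nabla(r_h - p)\bigr)\cdot \V_T\,\mathrm{d}x + \int_T (\F_h - \F)\cdot \V_T\,\mathrm{d}x.
\]
Integrating by parts the $\DIV(2\mu \D(\cdot))$ part of $\A$ (boundary term vanishes because $\V_T = 0$ on $\partial T$) and the gradient term, I get
\[
\int_T R_T\cdot \V_T\,\mathrm{d}x = (\W_h - \U, \V_T)_T + (2\mu \D(\W_h-\U), \D(\V_T))_T - (r_h - p, \Div \V_T)_T + (\F_h - \F, \V_T)_T.
\]
Applying Cauchy--Schwarz, the bubble inverse estimate $\|\V_T\|_{1,T} \lesssim h_T^{-1}\|R_T\|_{0,T}$, and the $L^2$ bound $\|\V_T\|_{0,T}\lesssim \|R_T\|_{0,T}$ yields
\[
\|R_T\|_{0,T}^2 \lesssim \|R_T\|_{0,T}\Bigl(h_T^{-1}\|\U-\W_h\|_{1,T} + h_T^{-1}\|p-r_h\|_{0,T} + \|\F-\F_h\|_{0,T}\Bigr),
\]
which, after dividing by $\|R_T\|_{0,T}$ and multiplying by $h_T$, gives $h_T\|R_T\|_{0,T} \lesssim \|\U-\W_h\|_{1,T} + \|p-r_h\|_{0,T} + \mathrm{osc}_T\F$.

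Squaring, summing over $T \in \Th$, and using the triangle inequality to return to $\A\W_h + \nabla r_h + \F$ then gives the stated bound. The only subtle point is the bookkeeping around the generalised operator $\A$: the extra zeroth-order term $\U-\W_h$ contributes to $\|\U-\W_h\|_{1,\Omega}$ (weaker than the $H^1$-seminorm), so no new constant is needed, and the integration by parts on $\DIV(2\mu\D(\cdot))$ against a bubble-supported test function is entirely local, so the scaling $h_T^{-1}$ coming from $\|\nabla \V_T\|_{0,T}$ is exactly what cancels the $h_T^2$ prefactor in the definition of the interior residual estimator. I expect the bubble-function inverse estimates and the extension-by-zero argument to be the most delicate part to write cleanly, but both are completely standard (as referenced).
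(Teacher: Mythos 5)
Your proof is correct and follows exactly the classical Verf\"urth element-bubble argument that the paper itself relies on (the paper omits the proof and simply cites \cite{gustafsson2022stabilized,verfurth1989posteriori,VerfurthBook}, where this is the technique used). Your parenthetical about the sign convention is well taken -- the residual should be read as $\F - \A\W_h - \nabla r_h$ for consistency with the strong equation -- and your handling of the zeroth-order term of $\A$ and of the extension-by-zero of $\psi_T R_T$ is exactly right.
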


We can now show the quasi-optimality of the method, i.e.~a best approximation
result with an additional term due to the inequality constraint.
\begin{thm}[Quasi-optimality]
\label{thm:quasi}
For any $(\W_h, r_h, \Mu_h) \in \bs{V}_h \times Q_h \times \bs{\varLambda}_h$,
it holds
\begin{equation}
\label{eq:thmquasi}
\begin{aligned}
&\enorm{(\U - \U_h, p - p_h, \Lam - \Lam_h)}\\
&\quad \lesssim \enorm{(\U - \W_h, p - r_h, \Lam - \Mu_h)} + \sqrt{\langle \Mu_h - \Lam, \U \rangle} + \mathrm{osc}\,\F,
\end{aligned}
\end{equation}
where $(\U_h, p_h, \Lam_h)$ denotes the solution to Problem \ref{prob:disc}.
\end{thm}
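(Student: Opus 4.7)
The plan is to bound the discrete error triple $\bs{\zeta}_h=(\U_h-\W_h,p_h-r_h,\Lam_h-\Mu_h)$ using the discrete stability of Theorem~\ref{thm:discstab}, and then obtain the stated inequality by the triangle inequality with the interpolation triple $(\W_h,r_h,\Mu_h)$. Specifically, Theorem~\ref{thm:discstab} produces test functions $(\V_h,q_h)\in\bs{V}_h\times Q_h$, with $\enorm{(\V_h,q_h,\Mu_h-\Lam_h)}_h\lesssim\enorm{\bs{\zeta}_h}_h$, such that $\Bf_h(\bs{\zeta}_h;\V_h,q_h,\Mu_h-\Lam_h)\gtrsim\enorm{\bs{\zeta}_h}_h^2$. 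I would then split $\bs{\zeta}_h$ as the sum of the exact error $(\U_h-\U,p_h-p,\Lam_h-\Lam)$ and the interpolation error $(\U-\W_h,p-r_h,\Lam-\Mu_h)$, and bound the action of $\Bf_h$ on each piece.

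For the exact-error contribution, I would establish a quasi-orthogonality identity by inserting $(\U,p,\Lam)$ into $\Bf_h$. Using the strong equations $\A\U+\nabla p=\F$ and $\Lam=\sig(\U,p)\N$, the stabilisers become consistent: $\Sf^1(\U,p;\V_h,q_h)$ reproduces $\Fh(\V_h,q_h)$ and $\Sf^2(\U,p,\Lam;\cdot)=0$. Combined with the equality parts of Problem~\ref{prob:cont}, this yields
\[
\Bf_h(\U,p,\Lam;\V_h,q_h,\Mu_h-\Lam_h)=\Lf_h(\V_h,q_h)-\langle\Mu_h-\Lam_h,\U\rangle.
\]
Subtracting this from the discrete variational inequality in Problem~\ref{prob:disc}, tested with the admissible $\Mu_h\in\bs{\varLambda}_h$, gives $\Bf_h(\U_h-\U,p_h-p,\Lam_h-\Lam;\V_h,q_h,\Mu_h-\Lam_h)\le\langle\Mu_h-\Lam_h,\U\rangle$. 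Since $\Lam_h\in\bs{\varLambda}_h\subset\bs{\varLambda}$ is itself admissible in the continuous variational inequality, one obtains $\langle\Lam-\Lam_h,\U\rangle\le 0$, so the right-hand side is bounded by $\langle\Mu_h-\Lam,\U\rangle$, which is the nonnegative slack term appearing in \eqref{eq:thmquasi}.

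For the interpolation-error contribution, $\Bf_h((\U-\W_h,p-r_h,\Lam-\Mu_h);\V_h,q_h,\Mu_h-\Lam_h)$ must be controlled by $(\enorm{(\U-\W_h,p-r_h,\Lam-\Mu_h)}+\mathrm{osc}\,\F)\cdot\enorm{\bs{\zeta}_h}_h$. The unstabilised piece $\Bf$ is handled by Cauchy--Schwarz together with the duality defining $\|\cdot\|_{-\frac12}$. For the stabilisers, linearity and the same consistency of the exact solution reduce the task to estimating the discrete residuals $\sum_{T\in\Th}h_T^2\|\A\W_h+\nabla r_h-\F\|_{0,T}^2$ and $\sum_{E\in\Gh}h_E\|\Mu_h-\sig(\W_h,r_h)\N\|_{0,E}^2$, which is precisely the content of Lemmas~\ref{lem:resbound1} and~\ref{lem:resbound2}. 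The corresponding test-function factors, involving $\A\V_h-\nabla q_h$ and $\sig(\V_h,q_h)\N$, are absorbed into $\enorm{\bs{\zeta}_h}_h$ by means of the inverse and trace bounds of Lemmas~\ref{lem:inverse}--\ref{lem:disctrace} and the definition of the mesh-dependent norm.

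Combining the two bounds produces $\enorm{\bs{\zeta}_h}_h^2\lesssim\langle\Mu_h-\Lam,\U\rangle+(\enorm{(\U-\W_h,p-r_h,\Lam-\Mu_h)}+\mathrm{osc}\,\F)\cdot\enorm{\bs{\zeta}_h}_h$. Young's inequality, a square root, the bound $\enorm{\cdot}\le\enorm{\cdot}_h$ and the triangle inequality $\enorm{(\U-\U_h,p-p_h,\Lam-\Lam_h)}\le\enorm{\bs{\zeta}_h}+\enorm{(\U-\W_h,p-r_h,\Lam-\Mu_h)}$ then yield \eqref{eq:thmquasi}. The main obstacle is the third paragraph: because the interpolation-error triple is not a discrete function, the stabilisation terms cannot be bounded by simple inverse estimates, and one must instead pass through the residual lower bounds, which is exactly where the data oscillation $\mathrm{osc}\,\F$ enters the estimate.
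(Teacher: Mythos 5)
Your proposal is correct and follows essentially the same route as the paper: discrete stability applied to the difference $(\U_h-\W_h,p_h-r_h,\Lam_h-\Mu_h)$, insertion of the exact solution via consistency of the stabilisers, the two variational inequalities producing the slack term $\langle\Mu_h-\Lam,\U\rangle$, continuity for the unstabilised form, Lemmas~\ref{lem:resbound1}--\ref{lem:resbound2} for the stabilisation terms, and Young's inequality plus the triangle inequality to conclude. Your explicit treatment of the final step (Young's inequality and the square root) is in fact slightly more careful than the paper's normalisation $\enorm{(\V_h,q_h,\Lam_h-\Mu_h)}_h=1$, but the argument is the same.
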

\begin{proof}
Let $(\W_h, r_h, \Mu_h) \in \bs{V}_h \times Q_h \times \bs{\varLambda}_h$ be arbitrary. Then by the discrete stability estimate \eqref{eq:discstab} there exists $(\V_h, q_h) \in \bs{V}_h \times Q_h$,
with the property
$\enorm{(\V_h, q_h, \Lam_h - \Mu_h)}_h = 1$, such that
\begin{equation}
\label{quasiproof1}
\begin{aligned}
    &\enorm{(\U_h - \W_h, p_h - r_h, \Lam_h - \Mu_h)}_h\\
    &\lesssim \Bf_h(\U_h - \W_h, p_h - r_h, \Lam_h - \Mu_h; \V_h, q_h, \Mu_h - \Lam_h) \\
    &\lesssim \Bf_h(\U_h, p_h, \Lam_h; \V_h, q_h, \Mu_h - \Lam_h) - \Bf_h(\W_h, r_h, \Mu_h; \V_h, q_h, \Mu_h - \Lam_h) \\
    &\lesssim \Lf_h(\V_h, q_h) - \Bf(\U, p, \Lam; \V_h, q_h, \Mu_h - \Lam_h) \\
    &\quad + \Bf(\U - \W_h, p - r_h, \Lam - \Mu_h; \V_h, q_h, \Mu_h - \Lam_h)\\
    &\quad  + \alpha_1 \Sf^1(\W_h, r_h; \V_h, q_h) + \alpha_2 \Sf^2(\W_h, r_h, \Mu_h; \V_h, q_h, \Mu_h - \Lam_h) ,
\end{aligned}
\end{equation}
where in the last step  we have used the discrete variational form and written out the discrete bilinear form. 
The first two terms on the right-hand side of \eqref{quasiproof1} can be written as
\begin{align*}
&\Lf(\V_h, q_h) - \Bf(\U, p, \Lam; \V_h, q_h, \Mu_h - \Lam_h) 
- \alpha_1 \Fh(\V_h, q_h) \\
&= ( \U, \Mu_h - \Lam_h)_{\partial \Omega}- \alpha_1 \Fh(\V_h, q_h) \\
&\leq \langle \Mu_h - \Lam, \U \rangle - \alpha_1 \Fh(\V_h, q_h)
\end{align*}
where the inequality follows from the inequality in \eqref{eq:weak1}.
The third term is bounded
using the continuity of the bilinear form $\Bf$ and
the final two stabilisation terms are bounded using Cauchy--Schwarz inequality and Lemmas~\ref{lem:inverse}, \ref{lem:resbound1} and \ref{lem:resbound2}.
The proof is completed using the trivial bound
\[
\enorm{(\U_h - \W_h, p_h - r_h, \Lam_h - \Mu_h)} \leq \enorm{(\U_h - \W_h, p_h - r_h, \Lam_h - \Mu_h)}_h
\]
and the triangle inequality.
\end{proof}

\newcommand*{\PiV}{\Pi_{\bs{V}_h}}
\newcommand*{\PiQ}{\Pi_{Q_h}}
\newcommand*{\PiM}{\Pi_{\bs{M}_h}}

Using a continuous pressure, we can consider the finite
element spaces 
\begin{align}
\bs{V}_h &= \{ \V \in \bs{V} : \V|_{T} \in P_k(T)^d~\forall T \in \Th \},\\
Q_h &= \{ q \in Q \cap C(\Omega) : q|_T \in P_l(T)~\forall T \in \Th \},\\
\bs{M}_h &= \{ \Mu \in \bs{M} : \Mu|_E \in P_m(E)^d~\forall E \in \Gh \},
\end{align}
where $k,l\geq1$ and $m\geq0$
are the polynomial
orders.
Alternatively, we may consider
a discontinuous pressure
which, however, requires
a quadratic velocity, $k \geq 2$.
It is also possible
to use a continuous Lagrange multiplier
together with any valid velocity--pressure combination.

\begin{rem}
The analysis up to this
point is valid for $d \in \{ 2, 3 \}$
and for any $k, l \geq 1$ and $m \geq 0$.
However, proving an optimal a priori
error estimate
based on the
quasi-optimality result shown in
Theorem~\ref{thm:quasi}
requires further assumptions.
For instance, in the two-dimensional case, assuming that $\Lam \in H^{1/2}(\partial \Omega)^2$, then
$|\Lam_t| \leq \kappa$ holds
pointwise and it should be clear that also $|(\PiM \Lam)_t| \leq \kappa$
where $\PiM$ is the $L^2$-projection onto $\bs{M}_h$.
This implies that $\PiM \Lam \in \bs{\varLambda}_h$
% which is not true in general, e.g., when $d=3$.
and, consequently, allows us to write the bound
\[
\| \Lam - \Mu_h \|_{-\frac12} = \| \Lam - \PiM \Lam \|_{-\frac12} \leq \| \Lam - \PiM \Lam \|_{0,\Omega} \lesssim h.
\]
Assuming, moreover, that $\U \in H^2(\Omega)^2$, one thus obtains an optimal a priori estimate for the lowest order elements given that
\begin{align*}
&\sqrt{(\Mu_h - \Lam, \U )_{\partial \Omega}} \\
&=\sqrt{(\PiM \Lam - \Lam, \U )_{\partial \Omega}} \\
&= \sqrt{(\PiM \Lam - \Lam, \U -\PiV \U)_{\partial \Omega}} \\
&\leq \sqrt{\|\Lam - \PiM \Lam\|_{0,\partial \Omega} \|\U -\PiV \U\|_{0,\partial \Omega}} \lesssim h,
\end{align*}
where $\PiV$ denotes the Lagrange interpolant onto $\bs{V}_h$. 
\end{rem}

\section{Solution algorithm}

We next derive our solution algorithm,
also known as Uzawa iteration,
following the steps given in
He--Glowinski~\cite{he2000steady}.
The derivation is given in detail because
the algorithm includes additional
terms due to the stabilisation.

The discrete variational problem can be split into
\begin{equation}
\label{eq:eqpart}
\Bf_h(\U_h, p_h, \Lam_h; \V_h, q_h, \Z) = \Lf_h(\V_h, q_h)
\end{equation}
and
\begin{equation}
\label{eq:ineqpart}
- \int_{\partial \Omega} \U_h \cdot( \Mu_h - \Lam_h )\,\mathrm{d}s -  \alpha_2 \sum_{E \in \Gh} h_E \int_{E} (\Lam_h - \sig(\U_h, p_h)\N) \cdot (\Mu_h - \Lam_h)  \,\mathrm{d}s  \leq 0.
\end{equation}
Combining the two terms we equivalently have
\begin{equation*}
-\int_{\partial \Omega} \Pi_{\bs{M}_h}(\U_h +  \alpha_2 \hf(\Lam_h - \sig(\U_h, p_h)\N)) \cdot (\Mu_h - \Lam_h)  \,\mathrm{d}s  \leq 0,
\end{equation*}
where $\hf$ is the boundary mesh size function and $\PiM$ is the
$L^2$-projection onto $\bs{M}_h$.
Now multiplying by an arbitrary $\rho>0$, and adding and subtracting $\Lam_h$
leads to
\begin{equation*}
\int_{\partial \Omega} (\Lam_h-\rho\,\Pi_{\bs{M}_h}(\U_h +  \alpha_2 \hf(\Lam_h - \sig(\U_h, p_h)\N)) - \Lam_h) \cdot (\Mu_h - \Lam_h)  \,\mathrm{d}s  \leq 0.
\end{equation*}
The above form implies that $\Lam_h$ is equal to the orthogonal projection of
\[
\Lam_h-\rho\,\Pi_{\bs{M}_h}(\U_h +  \alpha_2 \hf(\Lam_h - \sig(\U_h, p_h)\N))
\]
onto the constrained space $\bs{\varLambda}_h$.
The orthogonal projection can be written explicitly as
\[
\bs{P}(\X) = (\X \cdot \N) \N + \frac{\kappa (\X - (\X \cdot \N) \N)}{\max(\kappa, |\X - (\X \cdot \N) \N|)}
\]
which can be interpreted as enforcing the maximum length of the tangential component to $\kappa$.
As a conclusion, the inequality constraint \eqref{eq:ineqpart} can be reformulated as the equality constraint
\begin{equation}
\label{eq:returnmap}
\Lam_h = \bs{P}(\Lam_h-\rho\,\Pi_{\bs{M}_h}(\U_h +  \alpha_2 \hf(\Lam_h - \sig(\U_h, p_h)\N)))~~\text{a.e.~on $\partial \Omega$}.
\end{equation}
 
\begin{algo}[Uzawa iteration]
  \label{algo:uzawa}
  Let $(\U_h^0, p_h^0, \Lam_h^0)$
  be an initial guess, $TOL > 0$ be a stopping tolerance and set $i \leftarrow 1$.
  \begin{enumerate}
  \item[1.] Calculate $\Lam_h^i = \bs{P}(\Lam_h^{i-1} - \rho \, \Pi_{\bs{M}_h}(\U_h^{i-1} + \alpha_2 \hf(\Lam_h^{i-1} - \sig(\U_h^{i-1}, p_h^{i-1})\N))$.
  \item[2.] Solve for $(\U_h^i, p_h^i)$
    in $\Bf_h(\U^i_h, p_h^i, \Z; \V_h, q_h, \Z) = \Lf_h(\V_h, q_h) + \langle \Lam_h^i, \V_h \rangle$.
  \item[3.] Stop if $\|\Lam_h^i - \Lam_h^{i-1}\|_0 / \| \Lam_h^i \|_0 < TOL$.  Otherwise set $i \leftarrow i + 1$ and go to Step~1.
  \end{enumerate}
\end{algo}

\begin{rem}
  An assumption is made in
  Step 1 of Algorithm~\ref{algo:uzawa}
  that the orthogonal projection $\bs{P}$ can be performed
  directly on the discrete function.
  This is true, e.g., if the Lagrange multiplier is
  approximated by a piecewise constant function
  or a discontinuous, piecewise linear function. % TODO what to do otherwise?
\end{rem}

\section{Numerical experiment}

The numerical results are calculated with the
help of the software package scikit-fem~\cite{skfem2020}.
The source code
is available in \cite{gustafsson_tom_2023_8296578}.

\subsection{Convergence study}

In the first example, we consider the lowest order
method with $k=l=1$, $m=0$,
and solve the problem within
the domain
$\Omega = (-1,1)^2$ using
the material parameter values
$\kappa = 0.3$, $\mu=1$, and the loading function
\[
\F = (-y, x).
\]
We have set the numerical parameters to
$\rho=0.4$, $\alpha_1=\alpha_2=10^{-2}$,
$TOL=10^{-5}$
and solve the same problem
using a sequence of uniformly
refined meshes; cf.~Figure~\ref{fig:uniformmeshes}.
The Uzawa parameter $\rho=0.4$
is close to its upper bound (see, e.g., \cite{he2000steady} for more information) which
we have found by trial-and-error
in order to
reduce the number of iterations
required.
The stabilisation parameters
$\alpha_1$ and $\alpha_2$ have
been chosen, for simplicity,
to be equal while $\alpha_2$
is close to its upper bound,
as given by Theorem~\ref{thm:discstab}.

The components $u_h$ and $p_h$ of the discrete solution,
calculated using the finest mesh
in the sequence,
are visualized in Figure~\ref{fig:discsolsup}
while the discrete Lagrange multipliers
for some of the meshes
are given in Figure~\ref{fig:discsolslag}. As seen in the Figures, the fluid, which is flowing counterclockwise, is slipping along the middle part of all four sides of the boundary. 

In the absence of an analytical solution,
we have calculated the relative
errors in the discrete solutions
between two subsequent meshes
in Figures~\ref{fig:uerr}, \ref{fig:perr}
and \ref{fig:lamerr}.
Note that the relative error
can be shown to converge
at similar rates as the absolute error
by using the triangle inequality,
e.g.,
\[
\| \U_{2h} - \U_h \|_{1,\Omega}
= \| \U_{2h} - \U + \U - \U_h \|_{1,\Omega}
\leq \| \U_{2h} - \U \|_{1,\Omega} + \|\U - \U_h \|_{1,\Omega} \leq Ch.
\]
The numerical results suggest that the
total error converges linearly
as the observed rate is linear for the
velocity and the pressure, and
superlinear for
the Lagrange multiplier.

\subsection{Curved boundary}

In the second example, we demonstrate how the lowest order method performs with
curved boundaries.
The domain is now chosen as the half unit circle
\[
\Omega = \{ (x, y) : x^2 + (y - 0.5)^2 < 1, y < 0.5 \},
\]
and the parameters $\kappa = \rho = 0.1$.
The other parameters are as in the previous example.
The resulting velocity and pressure
fields are depicted in Figure~\ref{fig:discsolsupcircle}.
The tangential Lagrange multipliers are depicted in Figure~\ref{fig:lagmultscircle}.

\newcommand*{\vcenteredhbox}[1]{\begin{tabular}{@{}c@{}}#1\end{tabular}}

\begin{figure}
\centering
\includegraphics[width=0.32\textwidth]{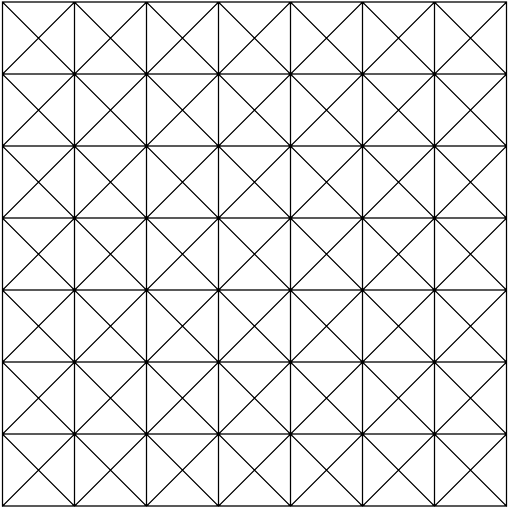}
\includegraphics[width=0.32\textwidth]{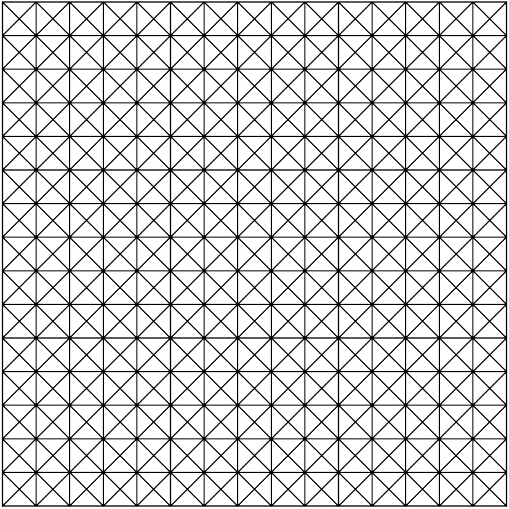}
\includegraphics[width=0.32\textwidth]{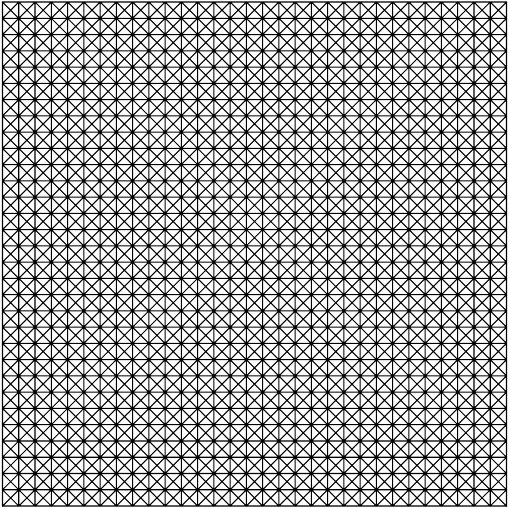}
\caption{Some meshes from the uniform sequence.}
\label{fig:uniformmeshes}
\end{figure}

\begin{figure}
\centering
\vcenteredhbox{\includegraphics[width=0.80\textwidth]{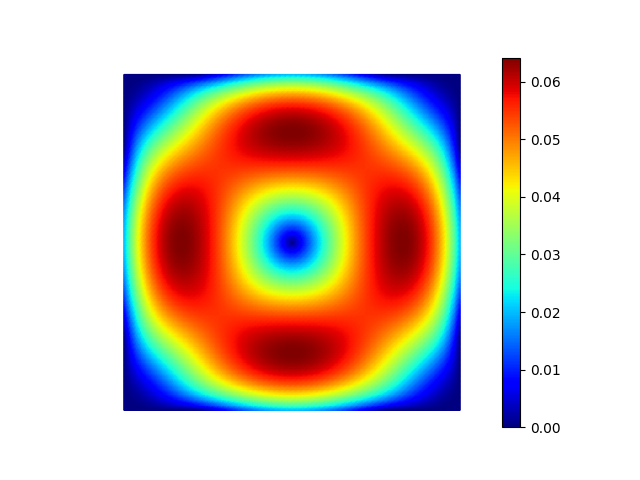}}
\vcenteredhbox{\includegraphics[width=0.80\textwidth]{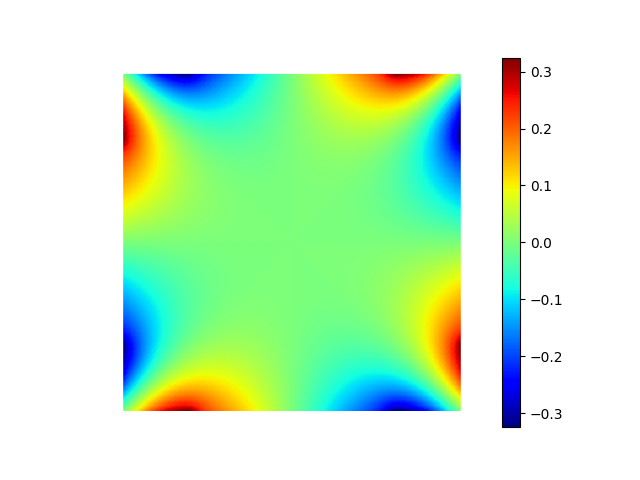}}
\caption{The velocity magnitude (top)
and the pressure (bottom)
computed using
the finest mesh in the uniform sequence.}
\label{fig:discsolsup}
\end{figure}

\begin{figure}
\centering
\includegraphics[width=0.4\textwidth]{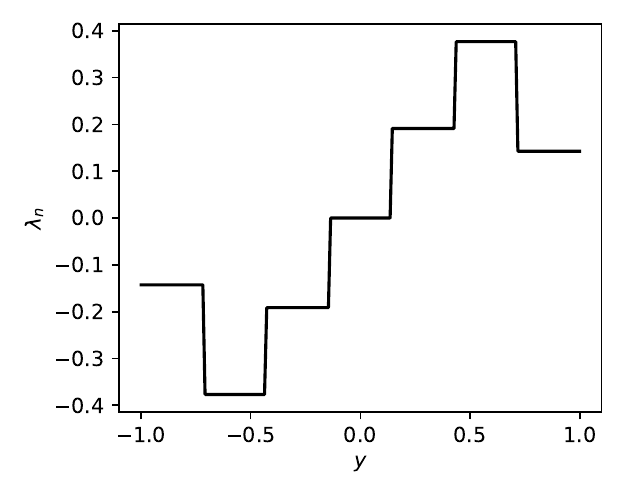}
\includegraphics[width=0.4\textwidth]{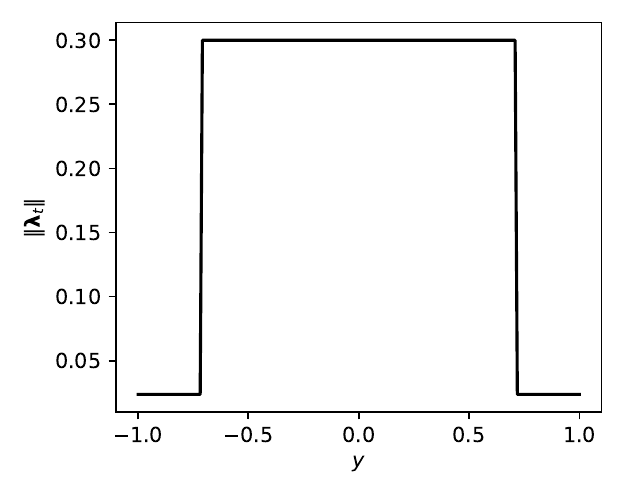}
\includegraphics[width=0.4\textwidth]{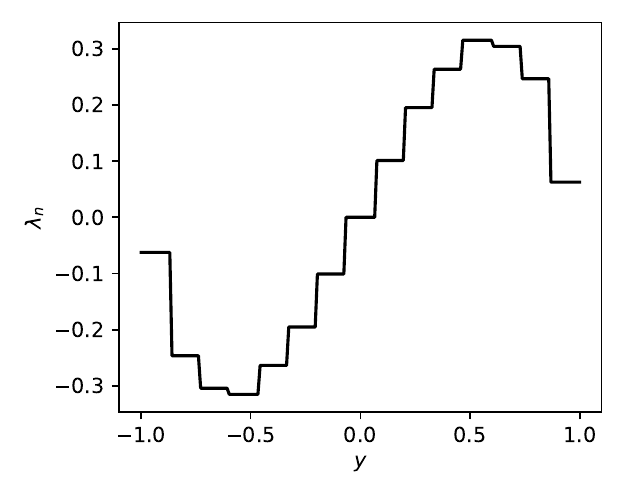}
\includegraphics[width=0.4\textwidth]{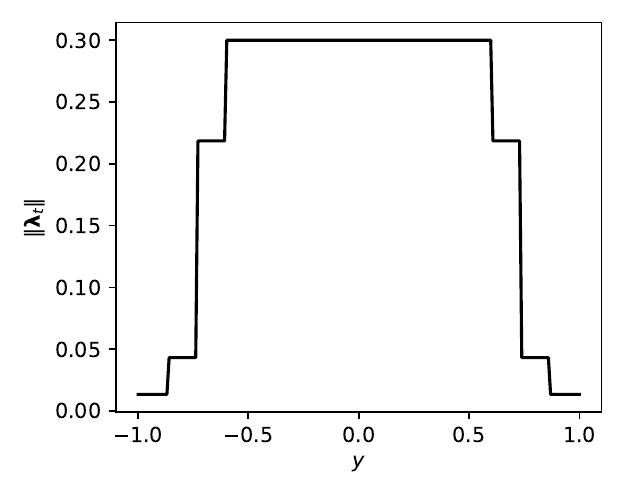}
\includegraphics[width=0.4\textwidth]{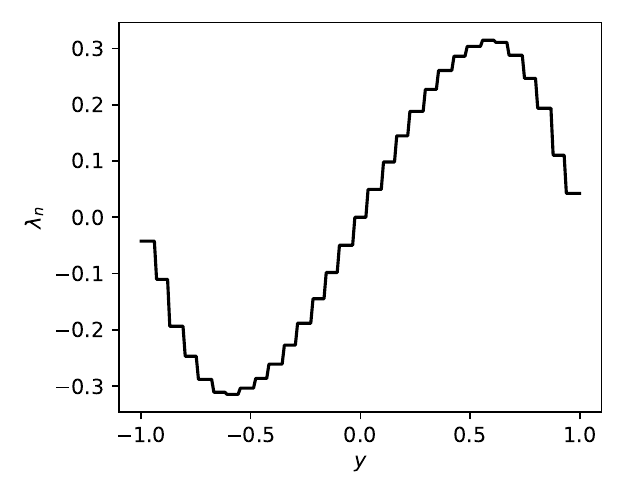}
\includegraphics[width=0.4\textwidth]{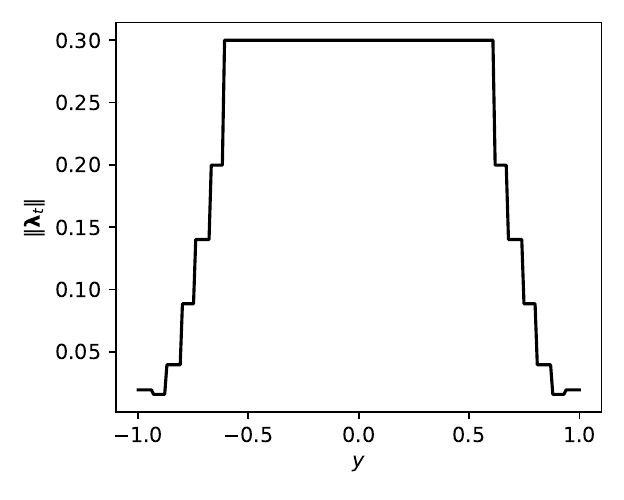}
\includegraphics[width=0.4\textwidth]{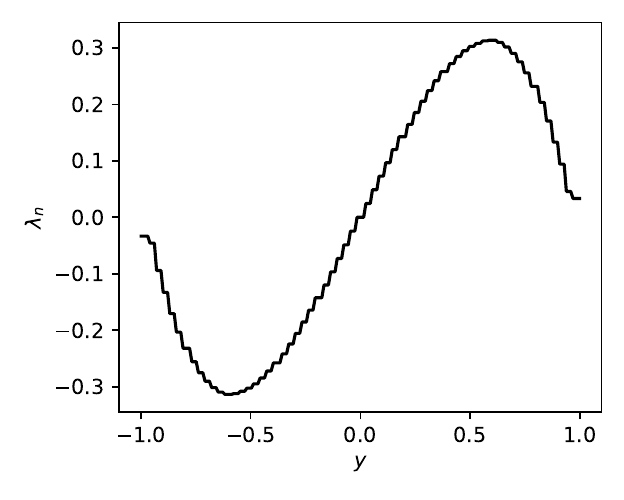}
\includegraphics[width=0.4\textwidth]{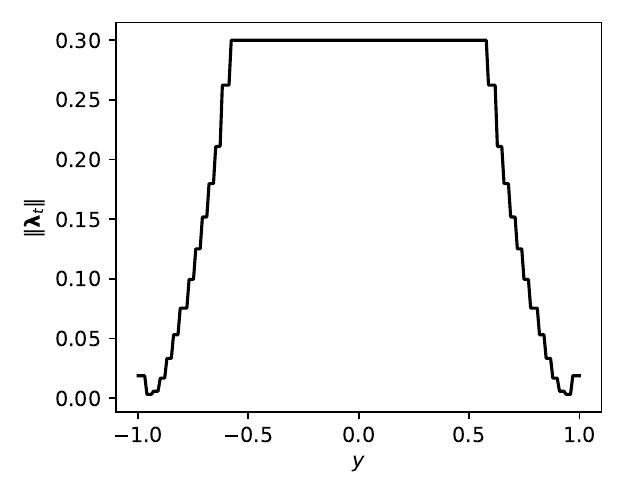}
\caption{The components of the discrete Lagrange multiplier solution at the boundary $x=1$ plotted for four different meshes from coarsest (top) to finest (bottom).}
\label{fig:discsolslag}
\end{figure}

\begin{figure}
\centering
\includegraphics[width=0.8\textwidth]{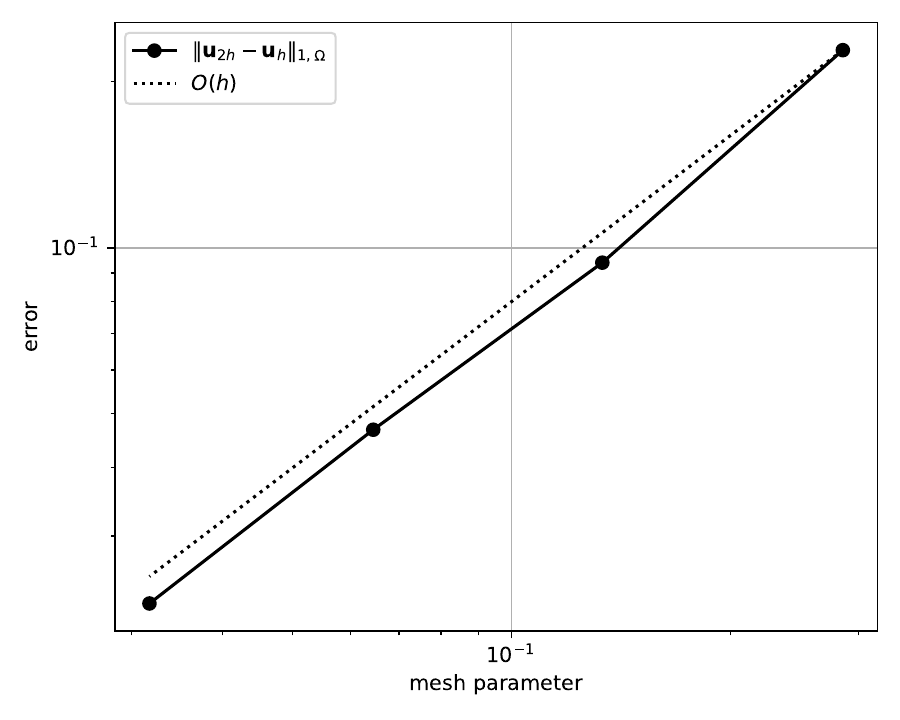}
\caption{The convergence of the error in the velocity.}
\label{fig:uerr}
\end{figure}
\begin{figure}
\centering
\includegraphics[width=0.8\textwidth]{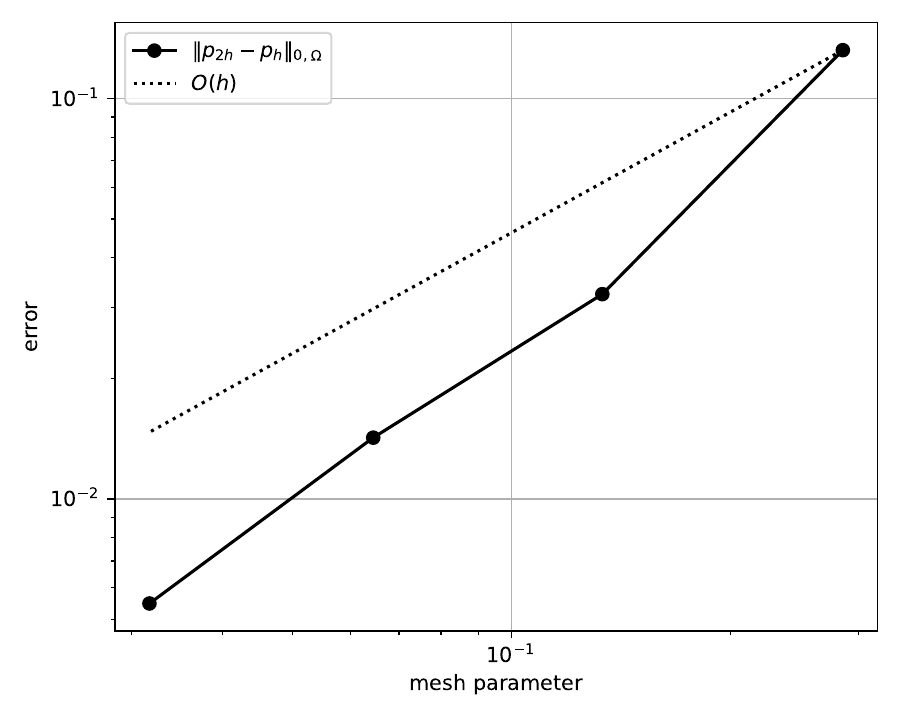}
\caption{The convergence of the error in the pressure.}
\label{fig:perr}
\end{figure}
\begin{figure}
\centering
\includegraphics[width=0.8\textwidth]{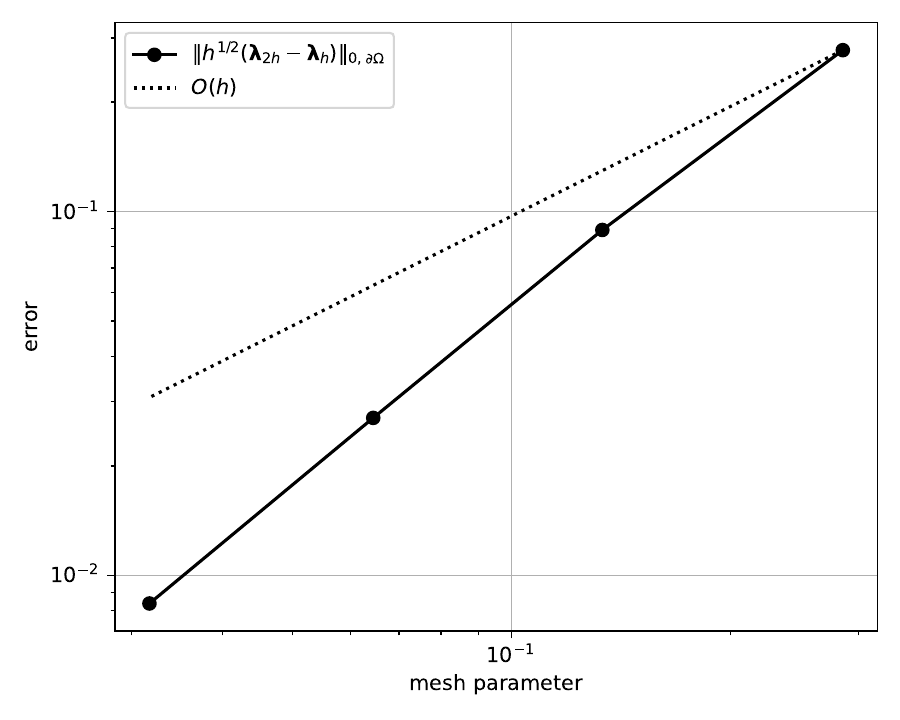}
\caption{The convergence of the error in the Lagrange multiplier.}
\label{fig:lamerr}
\end{figure}

\begin{figure}
\centering
\vcenteredhbox{\includegraphics[width=0.7\textwidth]{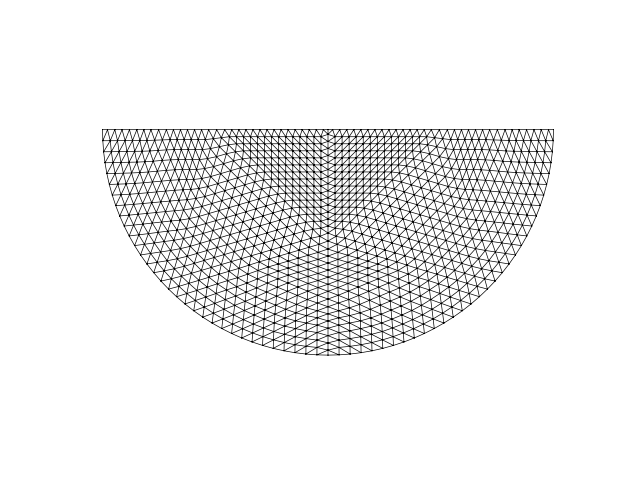}}
\vcenteredhbox{\includegraphics[width=0.70\textwidth]{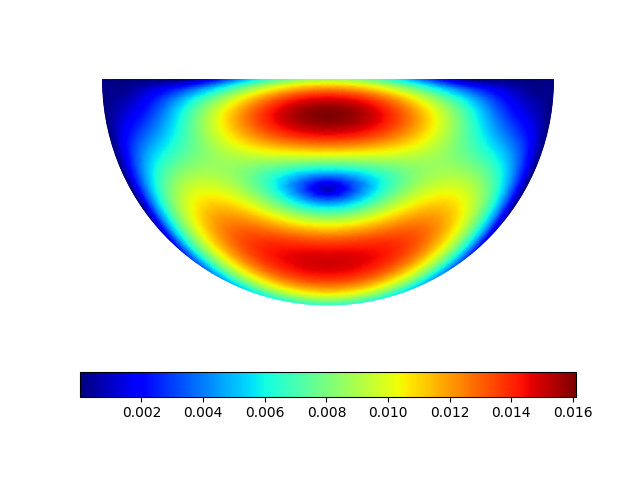}}
\vcenteredhbox{\includegraphics[width=0.70\textwidth]{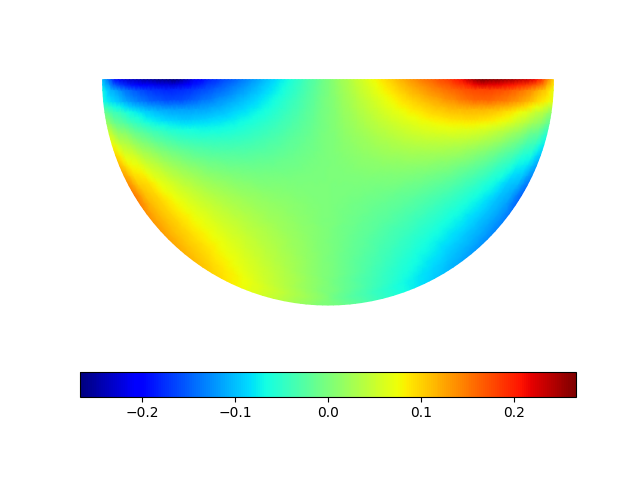}}
\caption{The mesh (top), the velocity magnitude (middle)
and the pressure (bottom)
for the curved boundary experiment.}
\label{fig:discsolsupcircle}
\end{figure}

\begin{figure}
\centering
\vcenteredhbox{\includegraphics[width=0.80\textwidth]{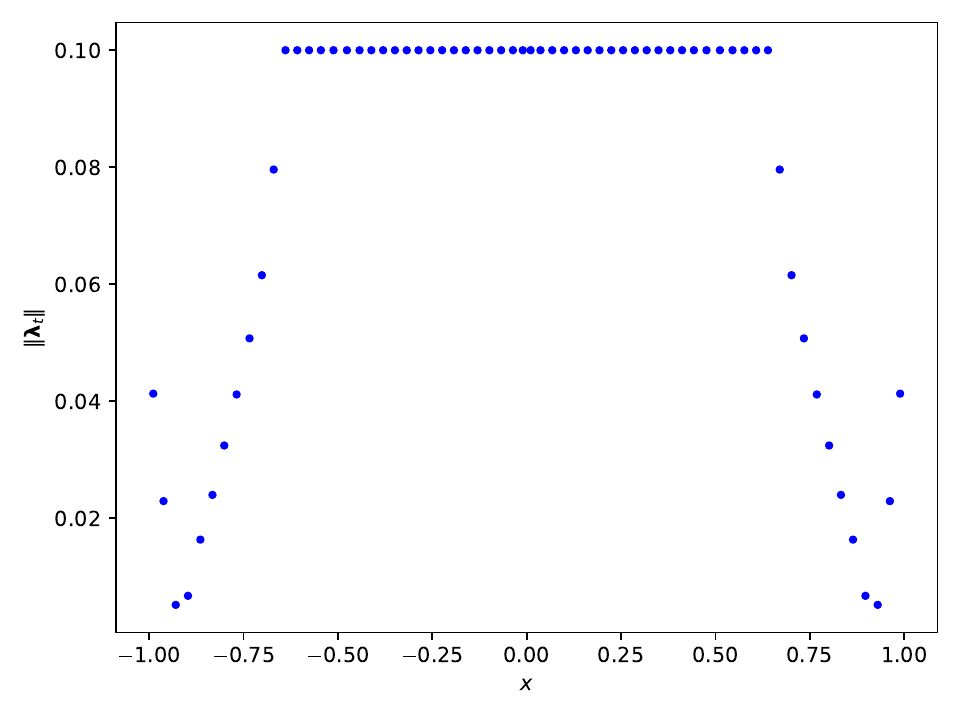}}
\vcenteredhbox{\includegraphics[width=0.80\textwidth]{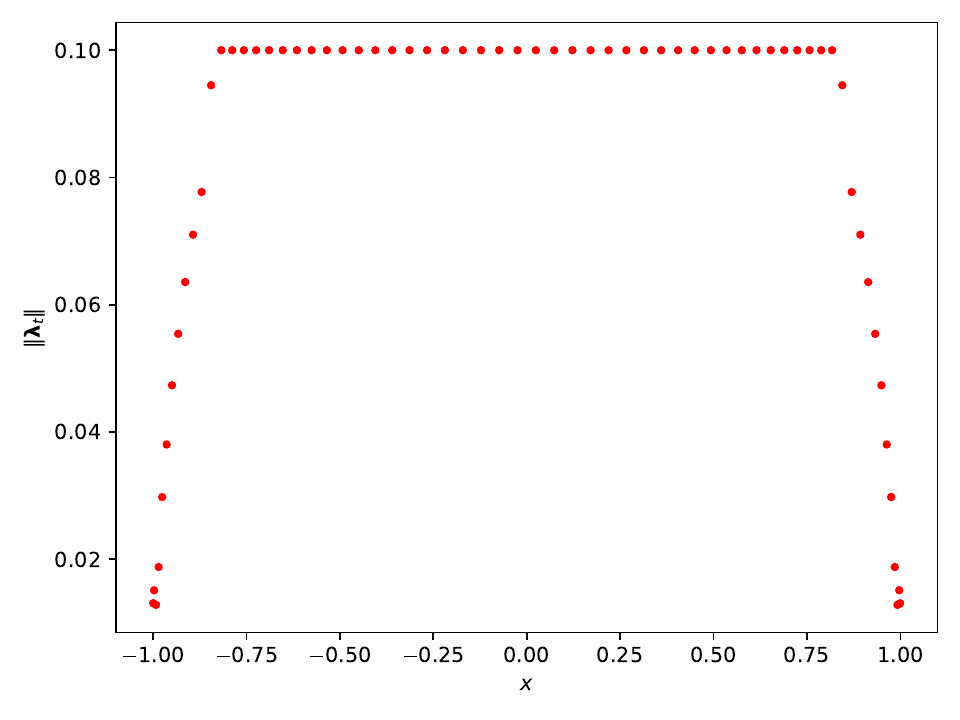}}
\caption{The magnitude of the tangential Lagrange multiplier on the top and the bottom boundaries for the curved boundary experiment.  Note that the Lagrange multiplier is elementwise constant and here the values are sampled at the element midpoints.}
\label{fig:lagmultscircle}
\end{figure}

\section{Conclusions}

We have introduced a stabilised finite element method for the mixed approximation of the Stokes problem with a nonlinear slip boundary condition of friction type.
We have proven stability and provided an a priori error estimate for the method. We have also suggested a solution algorithm and validated it through numerical experiments using the lowest order variant.

\section*{Acknowledgements}

This work was supported by the Academy of Finland (Decision 338341) and by the Portuguese government through FCT (Funda\c{c}\~ao para a Ci\^encia e a Tecnologia), I.P., under the project UIDB/04459/2020.

\section*{Conflicts of Interest}

The authors have no conflicts of interest.

\bibliographystyle{siamplain} 
\bibliography{stokes}

\begin{thebibliography}{10}

\bibitem{Ayadi2019}
{\sc M.~Ayadi, H.~Ayed, L.~Baffico, and T.~Sassi}, {\em Stokes problem with slip boundary conditions of friction type: Error analysis of a four-field mixed variational formulation}, Journal of Scientific Computing, 81 (2019), pp.~312--341, \url{https://doi.org/10.1007/s10915-019-01017-x}.

\bibitem{Ayadi2014}
{\sc M.~Ayadi, L.~Baffico, M.~K. Gdoura, and T.~Sassi}, {\em Error estimates for {S}tokes problem with {T}resca friction conditions}, ESAIM: Mathematical Modelling and Numerical Analysis, 48 (2014), pp.~1413--1429, \url{https://doi.org/10.1051/m2an/2014001}.

\bibitem{ayadi2010mixed}
{\sc M.~Ayadi, M.~K. Gdoura, and T.~Sassi}, {\em Mixed formulation for {S}tokes problem with {T}resca friction}, Comptes Rendus. Math{\'e}matique, 348 (2010), pp.~1069--1072, \url{https://doi.org/10.1016/j.crma.2010.10.001}.

\bibitem{barbosa1991finite}
{\sc H.~J. Barbosa and T.~J. Hughes}, {\em The finite element method with {L}agrange multipliers on the boundary: circumventing the {B}abu{\v{s}}ka-{B}rezzi condition}, Computer Methods in Applied Mechanics and Engineering, 85 (1991), pp.~109--128, \url{https://doi.org/10.1016/0045-7825(91)90125-P}.

\bibitem{barth2004taxonomy}
{\sc T.~Barth, P.~Bochev, M.~Gunzburger, and J.~Shadid}, {\em A taxonomy of consistently stabilized finite element methods for the {S}tokes problem}, SIAM Journal on Scientific Computing, 25 (2004), pp.~1585--1607, \url{https://doi.org/10.1137/S1064827502407718}.

\bibitem{djoko2016numerical}
{\sc J.~Djoko and J.~Koko}, {\em {Numerical methods for the Stokes and Navier--Stokes equations driven by threshold slip boundary conditions}}, Computer Methods in Applied Mechanics and Engineering, 305 (2016), pp.~936--958, \url{https://doi.org/10.1016/j.cma.2016.03.026}.

\bibitem{djoko2022gls}
{\sc J.~Djoko and J.~Koko}, {\em {GLS methods for Stokes equations under boundary condition of friction type: formulation-analysis-numerical schemes and simulations}}, SeMA Journal,  (2022), pp.~1--29, \url{https://doi.org/10.1007/s40324-022-00312-2}.

\bibitem{Djoko2022}
{\sc J.~Djoko, J.~Koko, and S.~Konlack}, {\em Stokes equations under {T}resca friction boundary condition: a truncated approach}, Advances in Computational Mathematics, 48 (2022), p.~22, \url{https://doi.org/10.1007/s10444-022-09933-7}.

\bibitem{Fang2019}
{\sc C.~Fang, K.~Czuprynski, W.~Han, X.~Cheng, and X.~Dai}, {\em Finite element method for a stationary {S}tokes hemivariational inequality with slip boundary condition}, IMA Journal of Numerical Analysis, 40 (2019), pp.~2696--2716, \url{https://doi.org/10.1093/imanum/drz032}.

\bibitem{Fujita1994}
{\sc H.~Fujita}, {\em A mathematical analysis of motions of viscous incompressible fluid under leak or slip boundary conditions}, RIMS K\^oky\^uroku, 888 (1994), pp.~199--216.

\bibitem{gustafsson_tom_2023_8296578}
{\sc T.~Gustafsson}, {\em {Numerical experiments for "Stabilised finite element method for Stokes problem with nonlinear slip condition"}}, May 2024, \url{https://doi.org/10.5281/zenodo.8296577}.

\bibitem{skfem2020}
{\sc T.~Gustafsson and G.~D. McBain}, {\em scikit-fem: A {P}ython package for finite element assembly}, Journal of Open Source Software, 5 (2020), p.~2369, \url{https://doi.org/10.21105/joss.02369}.

\bibitem{gustafsson2023mortaring}
{\sc T.~Gustafsson, P.~R{\aa}back, and J.~Videman}, {\em Mortaring for linear elasticity using mixed and stabilized finite elements}, Computer Methods in Applied Mechanics and Engineering, 404 (2023), p.~115796, \url{https://doi.org/10.1016/j.cma.2022.115796}.

\bibitem{gustafsson2022stabilized}
{\sc T.~Gustafsson and J.~Videman}, {\em Stabilized finite elements for {T}resca friction problem}, ESAIM: Mathematical Modelling and Numerical Analysis, 56 (2022), pp.~1307--1326, \url{https://doi.org/10.1051/m2an/2022048}.

\bibitem{he2000steady}
{\sc J.~He and R.~Glowinski}, {\em Steady {B}ingham fluid flow in cylindrical pipes: a time dependent approach to the iterative solution}, Numerical linear algebra with applications, 7 (2000), pp.~381--428, \url{https://doi.org/10.1002/1099-1506(200009)7:6<381::AID-NLA203>3.0.CO;2-W}.

\bibitem{Kashiwabara2013a}
{\sc T.~Kashiwabara}, {\em On a finite element approximation of the {S}tokes equations under a slip boundary condition of the friction type}, Japan Journal of Industrial and Applied Mathematics, 30 (2013), pp.~227--261, \url{https://doi.org/10.1007/s13160-012-0098-5}.

\bibitem{kouhia1995linear}
{\sc R.~Kouhia and R.~Stenberg}, {\em A linear nonconforming finite element method for nearly incompressible elasticity and {S}tokes flow}, Computer Methods in Applied Mechanics and Engineering, 124 (1995), pp.~195--212, \url{https://doi.org/10.1016/0045-7825(95)00829-P}.

\bibitem{LeRoux2005}
{\sc C.~Le~Roux}, {\em Steady {S}tokes flows with threshold slip boundary conditions}, Mathematical Models and Methods in Applied Sciences, 15 (2005), pp.~1141--1168, \url{https://doi.org/10.1142/S0218202505000686}.

\bibitem{li2019priori}
{\sc J.~Li, H.~Zheng, and Q.~Zou}, {\em A priori and a posteriori estimates of the stabilized finite element methods for the incompressible flow with slip boundary conditions arising in arteriosclerosis}, Advances in Difference Equations, 2019 (2019), p.~374, \url{https://doi.org/10.1016/j.cam.2019.05.006}.

\bibitem{li2011pressure}
{\sc Y.~Li and K.~Li}, {\em {Pressure projection stabilized finite element method for Stokes problem with nonlinear slip boundary conditions}}, Journal of computational and applied mathematics, 235 (2011), pp.~3673--3682, \url{https://doi.org/10.1016/j.cam.2011.02.027}.

\bibitem{qiu2018low}
{\sc H.~Qiu, C.~Xue, and L.~Xue}, {\em {Low-order stabilized finite element methods for the unsteady Stokes/Navier-Stokes equations with friction boundary conditions}}, Mathematical Methods in the Applied Sciences, 41 (2018), pp.~2119--2139, \url{https://doi.org/10.1002/mma.4738}.

\bibitem{rao1999effect}
{\sc I.~Rao and K.~Rajagopal}, {\em The effect of the slip boundary condition on the flow of fluids in a channel}, Acta Mechanica, 135 (1999), pp.~113--126.

\bibitem{Saito04}
{\sc N.~Saito}, {\em On the {S}tokes equation with the leak and slip boundary conditions of friction type: Regularity of solutions}, Publ. Res. Inst. Math. Sci., 40 (2004), pp.~345--383, \url{https://doi.org/10.2977/PRIMS/1145475807}.

\bibitem{schoof2006variational}
{\sc C.~Schoof}, {\em A variational approach to ice stream flow}, Journal of Fluid Mechanics, 556 (2006), pp.~227--251, \url{https://doi.org/10.1017/S0022112006009591}.

\bibitem{stenberg2015error}
{\sc R.~Stenberg and J.~Videman}, {\em On the error analysis of stabilized finite element methods for the {S}tokes problem}, SIAM Journal on Numerical Analysis, 53 (2015), pp.~2626--2633.

\bibitem{tartar2007introduction}
{\sc L.~Tartar}, {\em An introduction to {S}obolev spaces and interpolation spaces}, Springer, Berlin, 2007, \url{https://doi.org/10.1007/978-3-540-71483-5}.

\bibitem{verfurth1989posteriori}
{\sc R.~Verf{\"u}rth}, {\em A posteriori error estimators for the {S}tokes equations}, Numerische Mathematik, 55 (1989), pp.~309--325, \url{https://doi.org/10.1007/BF01390056}.

\bibitem{VerfurthBook}
{\sc R.~Verf{\"u}rth}, {\em A Posteriori Error Estimation Techniques for Finite Element Methods}, Oxford University Press, Oxford, 2013, \url{https://doi.org/10.1093/acprof:oso/9780199679423.001.0001}.

\bibitem{W11}
{\sc B.~Wohlmuth}, {\em Variationally consistent discretization schemes and numerical algorithms for contact problems}, Acta Numerica, 20 (2011), pp.~569--734, \url{https://doi.org/10.1017/S0962492911000079}.

\end{thebibliography}

\end{document}